\newtheorem{theorem}{Theorem}[section]
\newtheorem{lemma}[theorem]{Lemma}
\numberwithin{equation}{section}
\newtheorem{definition}[theorem]{Definition}
\newtheorem{remark}[theorem]{Remark}
\numberwithin{equation}{section}
\title{A cluster of many small holes with negative imaginary surface impedances may generate 
a negative refraction index} 
\author{
Ahmed Alsaedi
 \thanks{Nonlinear Analysis and Applied Mathematics Research Group (NAAM), Department of  Mathematics,
 Faculty of Sciences,
 King Abdulaziz University,
 P.O. Box 80203,
 Jeddah 21589,
 Saudi Arabia.}
 \and
 Bashir Ahmad \footnotemark[1]
 \and  
  Durga Prasad Challa \footnote{C\lowercase{orresponding author}: D\lowercase{urga} Prasad C\lowercase{halla}.}
 \thanks{Department of mathematics, 
 Tallinn University of Technology, Tallinn 19068, Estonia.
 (Email: durga.challa@ttu.ee).
 }
 \and Mokhtar Kirane\footnotemark[1]
 \thanks{Laboratoire de Math\'ematiques,
 P\^ole Sciences et Technologies, 
 Universit\'e de La Rochelle, 
 Avenue Michel Cr\'epeau
 17042, La Rochelle Cedex, France, (Email: mokhtar.kirane@univ-lr.fr).}
 \and  
 Mourad Sini \thanks{RICAM, Austrian Academy of Sciences,
 Altenbergerstrasse 69, A-4040, Linz, Austria.
 (Email: mourad.sini@oeaw.ac.at).
 }
}
\begin{document}
 \maketitle

\begin{abstract}
We deal with the scattering of an acoustic medium modeled by an index of refraction $n$ varying in a bounded 
region $\Omega$ of $\mathbb{R}^3$ and equal to unity outside $\Omega$. 
This region is perforated with an extremely large number of small holes $D_m$'s of 
maximum radius $a$, $a<<1$, modeled by surface impedance functions. 
Precisely, we are in the regime described 
by the number of holes of the order $M:=O(a^{\beta-2})$, the minimum distance between the holes is $d\sim a^t$ and the surface impedance functions of the form 
$\lambda_m \sim \lambda_{m,0} a^{-\beta}$ with $\beta >0$ and $\lambda_{m,0}$ being constants and 
eventually complex numbers. 
Under some natural conditions on the parameters 
$\beta, t$ and $\lambda_{m,0}$, we characterize the equivalent medium 
generating, approximately, the same scattered waves as the original perforated acoustic medium. We give an explicit error estimate between the scattered waves generated by the perforated
medium and the equivalent one respectively, as $a \rightarrow 0$. As applications of these results, we discuss the following findings:
\begin{enumerate}
\item If we choose negative valued imaginary surface impedance functions, 
 attached to each surface of the holes, then the equivalent medium behaves as a passive 
 acoustic medium only if it is an acoustic metamaterial with index of refraction
 $\tilde{n}(x)=-n(x),\; x \in \Omega$ and $\tilde{n}(x)=1,\; x \in \mathbb{R}^3\setminus{\overline{\Omega}}$. 
 This means that, with this process, we can switch the sign of the index of the refraction from positive to negative values.

 \item We can choose the surface impedance functions attached to each surface of the holes so that the equivalent index of 
 refraction $\tilde{n}$ is $\tilde{n}(x)=1,\; x \in \mathbb{R}^3$. 
 This means that the region $\Omega$ modeled by the original index of refraction $n$ is 
 approximately cloaked.
 
 \end{enumerate}
 \end{abstract}



\pagestyle{myheadings}
 \thispagestyle{plain}

\section{Introduction}\label{Introduction-smallac-sdlp}

 The derivation of the macroscopic behavior of a given physical system (or other biological, chemical systems, etc.) as an interaction of a 'dense' microscopic particles
is a well known procedure for a long time, see \cite{Choy:1999, Barnard:2010}. However, the 
 mathematical modeling as well as
 the justification of this procedure was understood only in the middle of the 
 last century, see \cite{B-L-P:1978, J-K-O:1994, M-K:2006}. One of the most 
 known ideas to describe the passage from
 the microscopic states to the equivalent macroscopic state is the 
 homogenization theory, see \cite{B-L-P:1978, J-K-O:1994, M-K:2006}. There are two approaches: the deterministic
 one and the probabilistic one. If one wants to estimate the macroscopic state 
 deterministically, then one needs to assume the periodicity in distributing the 
 small particles. To avoid the periodicity, one can assume the small particles
 to be randomly distributed and in this case we estimate the equivalent medium in 
 the probabilistic sense. Let us emphasize here that in both approaches, 
 we estimate the limit of the fields created by the microscopic structure to 
 the fields created by the macroscopic one with energy norms stated in the whole domain where
 the small particles are distributed.
 
 If we are interested only in estimating the limit of the fields away from 
 these small particles (as in the inverse problems and the design theories), then another alternative, avoiding both the periodicity and the randomness,
 is possible.
 
 A root of this alternative goes back to the seminal work by L. Foldy, see \cite{LLF:PR1945}, where he gave a close form of the field scattered by 
 $M$ isotropic point-like scatterers, see \cite{LLF:PR1945, Martin:2006} for details on this model.
 The justification, or the mathematical foundation, of Foldy's representation
 was proposed by Berezin and Faddeev in another seminal work, 
 see \cite{Berezin-Faddeev:1961}. The idea is that based on the 
 Krein extension theory of self-adjoint operators, one can model the diffusion by 
 point-like particles by the Schroedinger model with singular potentials of 
 Dirac type supported on those point-like scatterers. This has opened a very 
 fruitful direction of research, see the book \cite{A-G-H-H:AMS2005}, on exact models.
 Following Fadeev's approach, using Krein's extension theory, the diffusion by 
 small particles was studied in several works, see for instance \cite{Figari:1985, Na-So:2006}.

 A different, but still related, approach to describe the diffusion by small 
 particles is based on the integral equations. This is suggested by different 
 authors, as A. Ramm \cite{Ramm:2005} and
 H. Ammari and H. Kang \cite{A-K:2007}, for instance. 
 In particular, A. Ramm \cite{Ramm:2005, Ramm:2007} shows
 that the dominant term in 
 the expansion of the scattered field has the same form as the Foldy's close form
 where the centers of the small scatterers play the role of the point-like 
 particles. He used the (rough) condition 
$\frac{a}{d}\ll 1$, where $a$ is the maximum radius of the small particles and $d$ the 
minimum distance between them, and no error estimate is derived, but he formally characterized 
the equivalent medium.
We also cite the recent works by V. Maz'ya and A. Movchan 
\cite{M-M:2010, M-M-N:2013} where they study the Poisson problem and
 obtain error estimates. In their analysis, they rely on the maximum principle to 
 extend the boundary estimates, which argument does not go smoothly for stationary models 
 as the Helmholtz one.

 A rigorous approximation of the scattered fields, with error estimates in terms of the 
 number $M$ of the small scatterers, their minimum distance $d$ and the maximum radius $a$, was derived in 
 \cite{C-S:2014, C-S:2016} using the integral equations approach. 
 Based on these last estimates, we can characterize the equivalent medium with 
 explicit error estimates in terms of the parameter $a$, $a<<1$, in appropriate regimes 
 described by the other parameters $M$ and $d$ in terms of $a$, namely 
 $M:=M(a):=O(a^{-s})$ and $d:=d(a)\approx a^t$, as $a<<1$, with non negative parameters
 $s$ and $t$.
 This was done in \cite{C-S:2015, F-D-M:2016JOE} where the small particles are taken to be 
 soft acoustic or rigid elastic particles.
 The objective of the present work is to extend this study to the case where
 the small particles have impedance type surfaces with the scaled surface impedences of the form
 $\lambda_m \approx a^{-\beta}$ with non negative $\beta$, in addition to the scaled coefficients $M$ and 
 $d$ described above. Compared to the works in \cite{C-S:2015},
 we derive here better error estimates. Precisely, fixing $s=1$,  $\beta=0$ and $t=\frac{1}{3}$ 
 for simplicity and as an example, we derive here an error of the
 form $O(a^{\min\{\gamma,\; \frac{2}{3}\}})$ while in \cite{C-S:2015}
 it  is of the form $O(a^{\min\{\gamma,\;\frac{1}{15}\}})$. 
 Here $\gamma \in (0, 1]$ is the 
 Holder regularity exponent of the coefficients $\lambda_0$ and $K$ appearing 
 in the equivalent medium, as discussed below.
 
 The design of materials with desired, and in particular negative, index of refraction is a hot topic in
 the last years, see \cite{Cai-Shalaev:2010} for instance. Concerning this topic, our contribution is to have 
 shown mathematically that this is possible with high generality. Indeed, we show that 
 the equivalent medium  is modeled by three coefficients $K,P_0$ and $\lambda_0$ modeling
 respectively, the local distribution of the particles, their geometry and 
 the impedance coefficient attached to each particle. Since we have the 
 freedom in choosing the three functions $K,P_0$ and $\lambda_0$, then we can generate
 a large family of indices of refraction. In  particular 
 
 \begin{enumerate}
  \item if we choose the surface impedance
 to have negative imaginary parts, which is mathematically possible, see \cite{C-S:2016}, 
 then we show that  the 
 equivalent medium will be passive only if the index of refraction is negative, i.e.
 it behaves as a metamaterial.
 
 \item we can choose the coefficients $K,P_0$ and $\lambda_0$ so that the 
 equivalent index of refraction is educed to the unity.
 This means that the domain $\Omega$ modeled by the original 
 index of refraction $n$ is cloaked. 
 \end{enumerate}
 
 Let us emphasize that the derived explicit error estimates 
 between the fields generated by the microscopic structure and the one generated by
 the equivalent macroscopic structure might be useful to quantify
 the accuracy of the design.
 \bigskip
 
 Let us also cite some related works on the derivation of the 
 equivalent media. The first works go at least to Rauch-Taylor, 
 see \cite{Rauch-Taylor:1975}, see also the works by Cioranescu 
and Murat \cite{C-M:1979, C-M:1997}, who characterized the limiting 
problems for some Poisson type problems and provided convergence results 
(but with no rates of convergence) of the corresponding resolvent operators. 
Later, these results were extended and refined in the works of Ozawa and the ones of Figari et al., see respectively \cite{Ozawa:1983} and \cite{Figari:1985} for instance, 
using point interaction approximations of the Green's kernels. Compared to these results,
we do not need the periodicity nor the randomness in distributing the small 
particles in addition we model them via the 
scaled parameters $M$, $d$ and the $\lambda_m$'s with a high generality 
as described above.    
 \bigskip

The rest of the paper is organized as follows. In
section \ref{statement-results}, we state the main results. Precisely, 
we describe the mathematical model of the stationary scattering by 
many small bodies of impedance type in section \ref{The model-for-many-imp-holes}, then we state the main 
mathematical results in section \ref{results-equivalent-model}. 
We end this section with a discussion on the possible applications 
of these results in the acoustic metamaterials and the acoustic cloaking in 
section \ref{applications}. Section \ref{Proof-Theorem} is devoted to the proof of the main theorem of the paper.

\section{Statement of the results}\label{statement-results}

\subsection{The acoustic scattering by many impedance type holes}
\label{The model-for-many-imp-holes}
 Let $B_1, B_2,\dots, B_M$ be $M$ open, bounded and simply connected sets in $\mathbb{R}^3$ with Lipschitz boundaries
containing the origin. We assume that the Lipschitz constants of $B_j$, $j=1,..., M$ are uniformly bounded.  
We set $D_m:=\epsilon B_m+z_m$ to be the small bodies characterized by the parameter 
$\epsilon>0$ and the locations $z_m\in \mathbb{R}^3$, $m=1,\dots,M$. Let $U^{i}$ be a solution of the Helmholtz equation $(\Delta + \kappa^{2})U^{i}=0 \mbox{ in } \mathbb{R}^{3}$.  
We denote by  $U^{s}$ the acoustic field scattered by the $M$ small bodies $D_m\subset \mathbb{R}^{3}$, due to 
the incident field $U^{i}$ (mainly the plane incident waves $U^{i}(x,\theta):=e^{ikx\cdot\theta}$
with the incident direction $\theta \in \mathbb{S}^2$, where $\mathbb{S}^2$ being the unit sphere), with impedance boundary conditions. Hence the total field $U^{t}:=U^{i}+U^{s}$ satisfies the 
following exterior impedance problem of the acoustic waves

\begin{equation}
(\Delta + \kappa^{2}n^2(x))U^{t}=0 \mbox{ in }\mathbb{R}^{3}\backslash \left(\mathop{\cup}_{m=1}^M \bar{D}_m\right),\label{acimpoenetrable-1}
\end{equation}
\begin{equation}
\left.\frac{\partial U^{t}}{\partial \nu_m}+\lambda_m U^{t}\right|_{\partial D_m}=0,\, 1\leq m \leq M, \label{acgoverningsupport-1}  
\end{equation}
\begin{equation}
\frac{\partial U^{s}}{\partial |x|}-i\kappa U^{s}=o\left(\frac{1}{|x|}\right), |x|\rightarrow\infty, \label{radiationc-1}
\end{equation}
Again, the scattering problem 
(\ref{acimpoenetrable-1}-\ref{radiationc-1}) is well posed in the H\"{o}lder or Sobolev spaces, see \cite{C-K:1983, C-K:1998, Mclean:2000}
 in the case $\Im \lambda_m >0$. As we said for (\ref{acimpoenetrable-1}-\ref{radiationc-1}), 
 this last condition can be relaxed to allow $\Im \lambda_m$ to be negative, see \cite{C-S:2016}. Applying Green's formula to $U^s$, we can show that
 the scattered field $U^s(x, \theta)$ has the following asymptotic expansion:
\begin{equation}\label{far-field-n}
 U^s(x, \theta)=\frac{e^{i \kappa |x|}}{4\pi|x|}U^{\infty}(\hat{x}, \theta) + O(|x|^{-2}), \quad |x|
\rightarrow \infty,
\end{equation}
where the function
$U^{\infty}(\hat{x}, \theta)$ for $(\hat{x}, \theta)\in \mathbb{S}^{2} \times\mathbb{S}^{2}$  is the corresponding far-field pattern.

\begin{definition}\label{Def1}
We define  $
a:=\max\limits_{1\leq m\leq M } diam (D_m)$,\;  $
d:=\min\limits_{\substack{m\neq j\\1\leq m,j\leq M }} d_{mj}$
where $\,d_{mj}:=dist(D_m, D_j)$ and set $\kappa_{\max}$ as the upper bound of the used wave numbers, i.e. $\kappa\in[0,\,\kappa_{\max}]$.
 The distribution of the scatterers is modeled as follows.
\begin{enumerate}
\item The number $M~:=~M(a)~:=~O(a^{-s})\leq M_{max} a^{-s}$ with a given positive constant $M_{max}$.
\item The minimum distance $d~:=~d(a)~\approx ~a^t$, i.e. $d_{min} a^t \leq d(a) \leq d_{max}a^t $, with 
given positive constants $d_{min}$ and $d_{max}$.
 \item The surface impedance $\lambda_m~:=~\lambda_{m,0}a^{-\beta}$, where $\lambda_{m,0} \neq 0$ and might be a complex number.
\end{enumerate}
\end{definition}
Here the real numbers $s$, $t$ and $\beta$ are assumed to be non negative.

\bigskip

We call the upper bounds of the Lipschitz character of $B_m$'s, $M_{max}, d_{min}, d_{max}$ and $\kappa_{max}$ 
the set of the apriori bounds. In \cite{C-S:2016}, we have shown that there exist a positive constant $a_0$, 
$\lambda_-$ and $\lambda_+$ depending only on the set of the apriori bounds and on $n_{max}$ such that
if \begin{equation} \label{conditions}
a \leq a_0,\; \vert \lambda_{m, 0}\vert \leq \lambda_+, \; \vert \Re(\lambda_{m, 0}) \vert \geq \lambda_-,\; ~~ \beta<1, \;~~ s \leq 2-\beta,\;~~\frac{s}{3}\leq t
\end{equation}
then the far-field pattern $U^\infty(\hat{x},\theta)$ has the following asymptotic expansion
\begin{equation}\label{x oustdie1 D_m farmain-1}
U^\infty(\hat{x},\theta)=V^\infty_n(\hat{x},\theta) + \sum_{m=1}^{M}V_n^t(-\hat{x},z_m)\mathbb{Q}_m+
O\left(
 a^{3-s-2\beta}\right),
 \end{equation}
uniformly in $\hat{x}$ and $\theta$ in $\mathbb{S}^2$. The constant appearing in the estimate $O(.)$ depends only on 
the set of the apriori bounds, $\lambda_-$, $\lambda_+$ and on $n_{max}$. 
The quantity\;  $V_n^t(z_m, -\hat{x})$ is the total field 
evaluated at the point $z_m$ in the direction $-\hat{x}$, corresponding to the scattering problem 
\begin{equation}
(\Delta + \kappa^{2}n^2(x))V_n^{t}=0 \mbox{ in }\mathbb{R}^{3},
\label{acimpoenetrable-2}
\end{equation}
\begin{equation}
\frac{\partial V_n^{s}}{\partial |x|}-i\kappa V_n^{s}=o\left(\frac{1}{|x|}\right),
|x|\rightarrow\infty, \label{radiationc-2}
\end{equation}
i.e. $V_n^t(z_m, -\hat{x}):=
e^{-i\hat{x} \cdot z_m}+ V_n^s(z_m, -\hat{x})$, where $V_n^s$ is the scattered field.
 The coefficients $\mathbb{Q}_m$, $m=1,..., M,$ are the solutions of the following linear algebraic system
\begin{eqnarray}\label{fracqcfracmain-1}
 \mathbb{Q}_m +\sum_{\substack{j=1 \\ j\neq m}}^{M} C_m 
 G_\kappa(z_m,z_j)\mathbb{Q}_j&=&-C_m V_n^{t}(z_m, \theta),~~
\end{eqnarray}
for $ m=1,..., M$ where $C_m:=-\lambda_m\; \vert \partial D_m \vert$. Here $G_{\kappa}(x, z)$ is the outgoing Green's function 
corresponding to the scattering problem (\ref{acimpoenetrable-2}-\ref{radiationc-2}).
  \par
The algebraic system \eqref{fracqcfracmain-1} is invertible under the condition:

\begin{eqnarray}\label{invertibilityconditionsmainthm-1}
 s \leq 2 -\beta.
\end{eqnarray} 

\subsection{The equivalent model}\label{results-equivalent-model}

As the diameter $a$ tends to zero the error term in (\ref{x oustdie1 D_m farmain-1}) tends to zero for $t$ and $s$ such that 
\begin{equation}\label{general-condion-s-t}
\beta<1, \;~~ s \leq 2-\beta,\;~~\frac{s}{3}\leq t,
\end{equation}
and it is at least of the order $O(a^{1-\beta})$. Observe that we have the upper bound
\begin{equation}
 \vert \sum_{m=1}^{M}e^{-i\kappa\hat{x}\cdot z_m}Q_m\vert \leq M\sup_{m=1, ..., M}\vert Q_m\vert=O(a^{2-\beta-s}) 
\end{equation}
since $Q_m\approx \vert \lambda_m \vert \vert D_m \vert \approx a^{2-\beta}$, see \cite{C-S:2016}. Hence if the number of holes is $M:=M(a):=O(a^{-s}), \; s<2 -\beta$ and $t$ satisfies (\ref{general-condion-s-t}), $a\rightarrow 0$, 
then from (\ref{x oustdie1 D_m farmain-1}), we deduce that
\begin{equation}\label{s-smaller-1}
 U^\infty(\hat{x},\theta)\rightarrow V_n^\infty(\hat{x},\theta), \mbox{ as } a \rightarrow 0, \mbox{ uniformly in terms of } \theta \mbox{ and } \hat{x} \mbox{ in } \mathbb{S}^2.
\end{equation}
This means that this collection of holes has no effect on the homogeneous medium as $a \rightarrow 0$. 
The main concern of this work is to consider the case when $s=2-\beta$. 
Let $\Omega$ be a bounded domain, say of unit volume, 
containing the holes $D_m, m=1, ..., M$. We divide $\Omega$ 
into $[a^{\beta -2}]$ sub-domains $\Omega_m,\; m=1, ..., [a^{\beta -2}]$ 
such that each $\Omega_m$ contains $D_m$, with $z_m \in \Omega_m$ as its center,
and some of the other $D_j$'s. We assume the number of holes in $\Omega_m$, for $m=1, ..., [a^{\beta -2}]$,
to be uniformly bounded in terms of $m$. 
To be precise, we introduce $K: \mathbb{R}^3\rightarrow \mathbb{R}$ as a positive continuous and 
bounded function. Let each $\Omega_m$, $m\in \mathbb{N}$, be a cube such that $\Omega_m \cap \Omega$ (which we denote also by $\Omega_m$) 
is of volume $a^{2-\beta}\frac{[K(z_m)+1]}{K(z_m)+1}$ and contains $[K(z_m) +1]$ holes (where $[a]$ stands for the integral
part of 
$a\in \mathbb{R}$). We set $K_{max}:=\sup_{z_m}(K(z_m)+1)$, hence 
$M=\sum^{[a^{\beta -2}]}_{j=1}[K(z_m)+1]\leq K_{max}[a^{\beta-2}]=O(a^{\beta -2})$. 

\begin{figure}[htp]\label{distribution-obstacles}
\centering
\includegraphics[width=6cm,height =5cm,natwidth=610,natheight=642]
{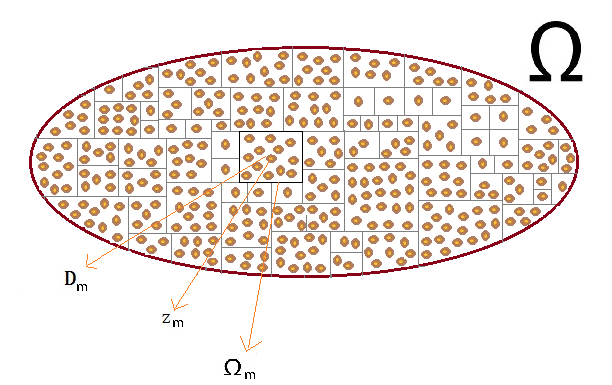}
\caption{An example on how the holes are distributed in $\Omega$.}
\end{figure}


\noindent We prove the following result.
\begin{theorem}\label{equivalent-medium}
Let $\lambda_0: \Omega \longmapsto \mathbb{C}$ be a continuous function  and take $\lambda_m:=\lambda_0(z_m) a^{-\beta}$.
Consider the small holes to be distributed, as described above, in a bounded domain $\Omega$, say of unit volume, with their number $M:=M(a):=O(a^{\beta -2})$
and their minimum distance 
$d:=d(a):=a^{t}$,\; $ \frac{2-\beta}{3}\leq t \leq 2 -\beta$ with $\beta<1$, as 
$a\rightarrow 0$.
\begin{enumerate}
 \item  If the shapes of the holes are different, under the condition that the reference 
 bodies $B_m$'s have uniformly upper bounded perimeters with uniformly lower 
 bounded radiis, then there exist a function
 $\bold{P}_0$ in $ \cap_{p\geq 1}L^p(\mathbb{R}^{3})$ with support in $\Omega$ such that
 \begin{equation}\label{B}
  \lim_{a\rightarrow 0}U^\infty(\hat{x},\theta)= U_{0}^\infty(\hat{x},\theta) \mbox{ uniformly in terms of } \theta \mbox{ and } \hat{x} \mbox{ in } \mathbb{S}^2
 \end{equation}
where $U_{0}^\infty(\hat{x},\theta)$ is the far-field corresponding to the scattering problem

\begin{equation}
(\Delta + \kappa^{2}n^2+(K+1)\bold{P}_0\lambda_0)U_{0}^{t}=0 \mbox{ in }\mathbb{R}^{3},\label{B-1}
\end{equation}
\begin{equation}
U_{0}^{t}=U_{0}^s +e^{i\kappa x\cdot \theta},  
\end{equation}
\begin{equation}
\frac{\partial U_{0}^{s}}{\partial |x|}-i\kappa U_{0}^{s}=o\left(\frac{1}{|x|}\right), |x|\rightarrow\infty. \label{radiationc-B-1}
\end{equation}
 
\item Assume in addition  
that $\lambda_0$ and  $K\mid_{\Omega}$ are in $C^{0, \gamma}(\Omega)$, 
$\gamma \in (0, 1]$ and 
the reference bodies $B_m$'s have the same perimeter and diameter, and 
denote by 
$P:=\frac{\vert \partial B \vert}{diam (B)}$. Then
\begin{equation}\label{C}
  U^\infty(\hat{x},\theta)= U_0^\infty(\hat{x},\theta) +
  O(a^{\min\{\gamma,\; \frac{2-\beta}{3},\; 1-3\beta,\; 2- \beta-t\}}) 
 \end{equation}
 uniformly in terms of $\theta$ and $\hat{x}$ in  $\mathbb{S}^2$, 
 where $\bold{P}_0=P$ in $\Omega$ and $P_0=0$ in $\mathbb{R}^{3} \setminus{\overline \Omega}$.
\end{enumerate}

\end{theorem}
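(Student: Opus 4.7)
The plan is to recast the Foldy algebraic system \eqref{fracqcfracmain-1} as a Riemann-sum discretisation of a Lippmann--Schwinger equation whose solution is the total field of the equivalent medium \eqref{B-1}--\eqref{radiationc-B-1}, and then pass to the limit in the far-field formula \eqref{x oustdie1 D_m farmain-1}.

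Under the hypotheses of part (2) all holes are congruent, so $|\partial D_m|=P\,a^{2}$ and $C_m=-P\lambda_0(z_m)\,a^{2-\beta}$. Rescaling $\tilde Q_m:=a^{\beta-2}\mathbb Q_m$ (which is $O(1)$ by the a priori estimates from \cite{C-S:2016}), the algebraic system reads
\[
\tilde Q_m \;-\; P\lambda_0(z_m)\,a^{2-\beta}\!\!\!\sum_{j\neq m}G_\kappa(z_m,z_j)\tilde Q_j \;=\; P\lambda_0(z_m)\,V_n^{t}(z_m,\theta).
\]
Because every partition cube $\Omega_\ell$ has volume $\approx a^{2-\beta}$ and hosts $[K(z_\ell)+1]$ close-by holes, the prefactor $a^{2-\beta}$ is precisely the volume element weighted by the local density $(K+1)$. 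Grouping the indices by cube, the sum formally converges to $\int_\Omega (K(y)+1)G_\kappa(x,y)\tilde Q(y)\,dy$ evaluated at $x=z_m$, with $\tilde Q$ a continuous interpolant of the $\tilde Q_m$. The limit equation
\[
\tilde Q(x) - P\lambda_0(x)\!\!\int_\Omega\!(K(y)+1)G_\kappa(x,y)\tilde Q(y)\,dy = P\lambda_0(x)V_n^{t}(x,\theta)
\]
is then equivalent, via the substitution $W:=V_n^{t}+\int G_\kappa(K+1)P\lambda_0 W$, to the Lippmann--Schwinger form of \eqref{B-1}--\eqref{radiationc-B-1}, since $G_\kappa$ is the outgoing Green's function of the background operator $\Delta+\kappa^{2}n^{2}$.

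The technical heart is the quantitative consistency estimate for this Riemann sum. I would split the sum over $j\neq m$ into a near-diagonal piece over the indices $j$ in a ball of radius $\sim d=a^{t}$ around $z_m$, and a far piece. The near piece is controlled by the local integrability of $G_\kappa$ together with the bound $|\tilde Q_j|=O(1)$ and the packing count implied by the minimum-distance condition, yielding a contribution of order $a^{2-\beta-t}$. The far piece is a midpoint-type cubature on cubes of side $\sim a^{(2-\beta)/3}$; the $C^{0,\gamma}$-regularity of $\lambda_0$ and $K$, combined with the smoothness of $G_\kappa$ off the diagonal, produces an error controlled by the Hölder exponent $\gamma$, together with a separate geometric correction $a^{(2-\beta)/3}$ coming from replacing hole positions by cube centres. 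Combined with the remainder $O(a^{1-\beta})$ inherent in \eqref{x oustdie1 D_m farmain-1} and the correction $O(a^{1-3\beta})$ coming from the next-order terms in the invertibility argument of \cite{C-S:2016}, the stability of both the discrete inversion (guaranteed by \eqref{invertibilityconditionsmainthm-1}) and of the limit Fredholm equation transfers these contributions into the pointwise bound
\[
|\tilde Q_m-\tilde Q(z_m)|=O\!\bigl(a^{\min\{\gamma,\,(2-\beta)/3,\,1-3\beta,\,2-\beta-t\}}\bigr).
\]

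Plugging back into \eqref{x oustdie1 D_m farmain-1} and repeating the same Riemann-sum argument on $\sum_m V_n^{t}(z_m,-\hat x)\mathbb Q_m=a^{2-\beta}\sum_m V_n^{t}(z_m,-\hat x)\tilde Q_m$ identifies it with $\int_\Omega V_n^{t}(y,-\hat x)(K(y)+1)P\lambda_0(y)W(y)\,dy = U_0^{\infty}(\hat x,\theta)-V_n^{\infty}(\hat x,\theta)$ at the same rate, producing \eqref{C}. The main obstacle I anticipate is the bookkeeping of the near-diagonal singular contribution of $G_\kappa$: the packing condition $t\geq (2-\beta)/3$ only barely prevents clustering inside the singular range of the kernel, so all four exponents appearing in the minimum in \eqref{C} must be tracked simultaneously and the stability constant of the discrete system must be shown uniform in $a$. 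For part (1), without Hölder regularity, the same rescaling still holds but no rate is produced; instead I would use weak-$L^{p}$ compactness of the piecewise-constant interpolants $\tilde Q_a$ and of $\bold{P}_{0,a}(x):=\sum_m a^{-2}|\partial D_m|\mathbf 1_{\Omega_m}(x)$ (uniformly bounded for every $p$), together with the compactness of convolution against $G_\kappa$ on $L^{2}(\Omega)$, to extract limits $\tilde Q$ and $\bold P_0$ and pass to the limit in the integral equation, yielding \eqref{B} without a quantitative rate.
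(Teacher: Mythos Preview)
Your proposal is correct and follows essentially the same route as the paper: rescale the algebraic system into a Riemann--sum discretisation of the limiting Lippmann--Schwinger equation, split the kernel error into a near-diagonal piece (yielding the $a^{2-\beta-t}$ contribution) and a far cubature piece on cubes of side $a^{(2-\beta)/3}$ (yielding the $a^{(2-\beta)/3}$ contribution), absorb the Foldy--Lax remainder $a^{1-3\beta}$, and transfer via the uniform stability of both the discrete and continuous inversions. The only organisational difference is that the paper inserts an auxiliary Lippmann--Schwinger equation with piecewise-constant coefficients $K_a,\bold{C}_a$ as an intermediate comparison object, so that the $O(a^{\gamma})$ contribution from the H\"older regularity of $K$ and $\lambda_0$ is isolated in a separate step (and, for part~1, the weak-$L^2$ limit of $\bold{C}_a$ furnishes $\bold{P}_0$); this is bookkeeping rather than a different idea.
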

 \bigskip
 
 We see from case 2 of Theorem \ref{equivalent-medium} that
 \begin{equation}\label{C-1}
  U^\infty(\hat{x},\theta)= U_0^\infty(\hat{x},\theta)+\left\{\begin{array}{ccc}
O(a^{\min\{\gamma,\; \frac{2-\beta}{3},\; 2- \beta-t\}})&   \mbox{ if } 0\leq \beta \leq \frac{1}{8}\\
O(a^{\min\{\gamma,\; 1-3\beta,\; 2- \beta-t\}})& \mbox{ if } \frac{1}{8} \leq \beta \leq \frac{1}{3}
\end{array}\right.
 \end{equation}
and for $\beta \geq \frac{1}{3}$, the remainder is no longer tending to zero as $a$ tends to zero.

We also see that the best error estimate is attained for $\beta=0$ and 
it is $O(a^{\min\{\gamma,\; \frac{2}{3},\; 2-t\}})$.
In particular if we reasonably take $t \leq 1$, which means that the minimum distance 
is of the order of the diameters, i.e. $d \approx a$, and $\gamma \geq \frac{2}{3}$, then 
 \begin{equation}\label{C-2}
  U^\infty(\hat{x},\theta)= U_0^\infty(\hat{x},\theta)+
O(a^{\frac{2}{3}}),\; a \longrightarrow 0.
 \end{equation}
 However in this case, i.e. $\beta =0$, the number of small holes attains its maximum, 
 $M=O(a^{\beta-2})=O(a^{-2})$. Actually, there is a compromise between the number of the small holes and 
 the order of the approximation. In short, the larger is the number of the small holes (or the smaller is 
 $\beta$) the better 
 is the approximation.
 
 \subsection{Some applications}\label{applications}
 As a corollary of Theorem \ref{equivalent-medium}, we deduce the following results.
 \begin{enumerate}
 \item We write $\kappa^2 n^2 +(K+1)P_0 \lambda_0 =\kappa^2\tilde{n}^2$, i.e. 
 $\tilde{n}^2=n^2 +\frac{(K+1)P_0 \lambda_0}{\kappa^2}$. 
 This way of representing the equivalent coefficient 
 $\kappa^2 n^2 +(K+1)P_0 \lambda_0$ means that the equivalent material behaves 
 as an acoustic material
 whose index of refraction is $\tilde{n}$ satisfying 
 $
  \tilde{n}^2=n^2 +\frac{(K+1)P_0 \lambda_0}{\kappa^2}.
 $
 In particular, we can choose $\lambda_0$, see remark \ref{choosing-the-impedance}, as
 \begin{equation}\label{choice of impedance}
 \lambda_0:=\tilde{\lambda}_0\; \kappa^2;
 \end{equation}
 then
 \begin{equation}\label{assumotion?}
  \tilde{n}^2=n^2 +(K+1)P_0 \tilde{\lambda}_0.
 \end{equation}
 We set $\tilde{n}=\tilde{n}_1+i \tilde{n}_2$. This new acoustic material 
 will be passive only if $\Im\; \tilde{n}=\tilde{n}_2 \geq 0$. 
 From (\ref{assumotion?}), we deduce that
 \begin{equation}\label{beta-1-beta-2}
 \tilde{n}_1^2-\tilde{n}_2^2=n^2\; +\; 2\pi (K+1)P_0 \Re\; 
 \tilde{\lambda}_0 ~~\mbox{ and }~~ \; \tilde{n}_1\; \tilde{n}_2=
 \pi(K+1)P_0 \Im\; \tilde{\lambda}_0.
 \end{equation}
 Recall that the coefficient $\lambda_0$ comes from the surface impedance 
 functions $\lambda_{m}$ attached to every small body of the collection of 
 the small bodies
 generating the coefficient $n^2 +(K+1)P_0 \tilde{\lambda}_0$. Hence, if we choose 
 $\lambda_m$'s so that $\Im \tilde{\lambda}_0>0$, then necessarily 
 $\tilde{n}_1>0$ and then we can generate acoustic materials having index of 
 refraction as
 \begin{equation}\label{usual-index}
\Re \tilde{n} >0\;~ \mbox{ and }~ \Im \tilde{n}>0. 
 \end{equation} 
 Now, if we choose $\lambda_m$'s so that $\Im(\tilde{\lambda}_0)<0$, 
 then we deduce from (\ref{beta-1-beta-2}) and the fact that $\tilde{n}_2 \geq 0$ that necessarily 
 $\tilde{n}_1<0$. 
 With this way, we can generate acoustic materials of the form:
 \begin{equation}\label{Unsual-index}
 \Re\; \tilde{n} <0\; \mbox{ and } \; \Im\; \tilde{n} >0.
 \end{equation}
 In addition, if we choose the surface impedance function so that 
 $\lambda_0:=\lambda_0(\epsilon)$ with $\Re\; \lambda_0(\epsilon)\; (\; >0 \;) \rightarrow 0$ and 
 $\Im\; \lambda_0(\epsilon) (\; <0\; ) \rightarrow 0$, as $\epsilon \rightarrow 0$, so that
 the condition (\ref{condition-lambda+and-lambda-}) is satisfied i.e. 
 $\frac{\Re\; \lambda_0(\epsilon)}{\vert \lambda_0(\epsilon)\vert^2}>\frac{\sqrt{26 M_{max}}}{\pi }$
 , then the two equations in (\ref{beta-1-beta-2}) imply that
 $$
 \Re\; \tilde{n}(\epsilon)\rightarrow -n\; \mbox{ and }\; \Im\; \tilde{n}(\epsilon)\rightarrow 0\;~  \mbox{ as } \epsilon \rightarrow 0.
 $$
 
 As a conclusion, if we perforate a given acoustic material, modeled by the index of 
 refraction $n(x)$, with appropriately distributed small holes with
 negative imaginary part surface impedance functions, attached to each surface, 
 the equivalent medium behaves as a passive acoustic medium only if it is an acoustic 
 metamaterial, i.e. with index of refraction
\begin{equation}\label{Pure-metamaterial} 
 \tilde{n}(x)=-n(x).
\end{equation} 
 
 \item We can choose the surface impedance functions so that 
 $n^2 +\frac{(K+1)P_0 \lambda_0}{\kappa^2}=1,\; x\in \Omega$. For instance we take 
 $\lambda_0:=\tilde{\lambda}_0\; \kappa^2$ and 
 $\tilde{\lambda}_0:=\frac{1-n^2}{(K+1)P_0}$ . Hence the equivalent index of refraction $\tilde{n}$ 
 is $\tilde{n}(x)=1,\; x \in \mathbb{R}^3$. This means that the region $\Omega$ modeled 
 by the index of refraction $n$ is approximately cloaked. Observe that since we can take the surface impedances complexe
 valued with any sign then we can cloak the region $\Omega$ defined by complex valued index of 
 refraction with any sign of the real and imaginary parts. 
 
 \end{enumerate}
 
 \begin{remark}\label{choosing-the-impedance}
  The surface impedance in (\ref{choice of impedance}), i.e.
  $\lambda_0:=\tilde{\lambda}_0\; \kappa^2$, will be achieved if we choose the surface impedances of the 
  small holes as 
  \begin{equation}\label{frequency-dependent-surface-impedance}
  \lambda_m:=\lambda_m(\kappa):=\tilde{\lambda}_{m,0}\; 
  \kappa^2 a^{-\beta},
  \end{equation}
  with $a <<1$, for instance.  Since these surface impedance functions appear in the boundary conditions (\ref{acgoverningsupport-1}), i.e. 
  \begin{equation}
\frac{\partial U_m}{\partial \nu_m}+\lambda_m(\kappa) U_m=0,\, 
1\leq m \leq M,\; \mbox{ on } \partial D_m, \nonumber 
\end{equation}
 which we can rewrite for convenience, to link the acoustic pressure $U_m$ to the 
 velocity on the boundary 
 $\frac{\partial U_m}{\partial \nu_m}$, as
 \begin{equation}
U_m + \sigma_m(\kappa)\; \frac{\partial U_m}{\partial \nu_m}=0,\, 
1\leq m \leq M, \; \mbox{ on } \partial D_m \label{acgoverningsupport-1--2}  
\end{equation}
where $\sigma_m(\kappa):=\lambda_m^{-1}(\kappa)$.  In the time domain these impedance boundary conditions are translated as
\begin{equation}
 \tilde{U}_m(t, x)+ \int^\infty_{-\infty} \tilde{\sigma}_m(t-t') \frac{\partial \tilde{U}_m}
 {\partial \nu_m} (t', x)dt'=0,\; x \mbox{ on } \partial D_m. \label{acgoverningsupport-1--3}  
\end{equation}
 where $\tilde{\sigma}_m(t):=\frac{1}{2\pi}\int^\infty_{-\infty} \sigma(\kappa)
 e^{-i\kappa t} d\kappa$
  and $\tilde{U}_m(t,x):=\frac{1}{2\pi}\int^\infty_{-\infty} \tilde{U}_m(\kappa, x)
  e^{-i\kappa t} d\kappa$. 
 \bigskip
 
 The wave propagation with the type of time domain impedance boundary conditions in 
 (\ref{acgoverningsupport-1--3}) 
 were recently object of studies see for instance \cite{D-BB:2014}. Sufficient conditions on the 
 admissibility of the impedance boundary conditions (\ref{acgoverningsupport-1--3}), as the reality, passivity and 
 causality conditions,  are given in the book  \cite{R-H:2015} and also discussed in 
 \cite{D-BB:2014}. These conditions are given in the frequency domain and, if $\Re\; 
 \tilde{\lambda}_0 \geq 0$ and due to the 
 decay in terms of $\kappa$,  our surface 
 impedances $\sigma_m(\kappa)=\lambda^{-1}_m$ with $\lambda_m$ given in 
 (\ref{frequency-dependent-surface-impedance}) satisfy those conditions. 
 As a consequence, we do hope that the choice of the surface impedances used in the applications described above 
 might make sense in practice.
 \end{remark}

\section{Proof of Theorem \ref{equivalent-medium}}\label{Proof-Theorem} 
\subsection{The relative distribution of the small bodies}\label{The-relative-distribution-of-the-small-bodies}
We start with the following observation from \cite{F-D-M:2016JOE} on the relative distribution of the 
small bodies. For $m=1,\dots, M$ fixed, we distinguish between the obstacles $D_j$, $j\neq\,m$, by keeping them into different layers based on their distance from $D_m$.  
Let us first assume that $K(z_m)=0$ for every $z_m$. Hence each $\Omega_m$ has the (same) volume $a^{2-\beta}$ and contains only one obstacle $D_m$. 
 We arrange these cubes in cuboids, in different layers such that the total cubes upto the $n^{th}$
layer consists of $(2n+1)^3$ cubes for $n=0,\dots,[\left(a^{\frac{2-\beta}{3}}-\frac{a}{2}\right)^{-1}]$, 
and $\Omega_m$ is located at the center, see Fig \ref{distribution-obstacles}. 
Hence the number of obstacles, we denote by $ D^n_j$ and located in the $n^{th}$, $n\neq0$, layer  
will be $[(2n+1)^3-(2n-1)^3]=24 n^2+2$ and their distance from $D_m$ is 
greater than ${n}\left(a^{\frac{2-\beta}{3}}-\frac{a}{2}\right)$. Observe that, $\frac{2-\beta}{2}
a^{\frac{2-\beta}{3}}\leq \left(a^{\frac{2-\beta}{3}}-\frac{a}{2}\right)\leq\;a^{\frac{2-\beta}{3}}$.
Hence we deduce the needed estimate
\begin{equation}\label{distance-between-layers}
 d(D^n_j, D_m) \geq \frac{n a^{\frac{2-\beta}{3}}}{2}.
\end{equation}
Now, we come back to the case where $K(z_m)\neq 0$. As 
$\frac{1}{2}\leq \frac{[K(z_m)+1]}{K(z_m)+1}\leq 1$, then with such $\Omega_m$'s, 
the total cubes located in the $n^{th}$ layer consists of at most the double 
of $[(2n+1)^3-(2n-1)^3]$, i.e. $48n^2+4$ and the inequality (\ref{distance-between-layers}) is also verified.

\begin{figure}[htp]
\centering
\includegraphics[width=4.5cm,height =4.5cm]{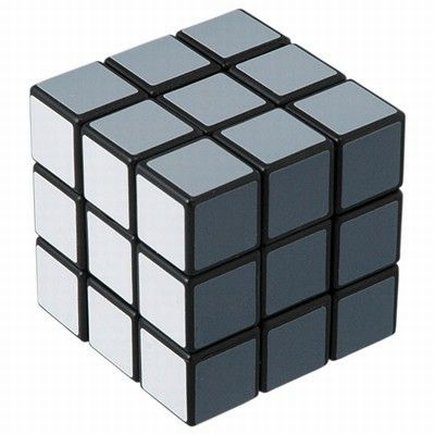}
\caption{Rubik's cube consisting of two layers}\label{fig:1-acsmall}
\end{figure}

\subsection{Solvability of the linear-algebraic system \eqref{fracqcfracmain-1}}\label{Solvability-of-the-linear-algebraic-system-elastic-small}

The algebraic system 
\eqref{fracqcfracmain-1} can be written in compact form as
\begin{equation}\label{compacfrm1}
 \mathbf{B}{Q}=\mathrm{U}^I,
\end{equation}
\noindent
where ${Q},\mathrm{U}^I \in \mathbb{C}^{M\times 1}\mbox{ and } \mathbf{B}\in\mathbb{C}^{M\times M}$ are defined as;
\begin{eqnarray}
\mathbf{B}:=\left(\begin{array}{ccccc}
   -\frac{1}{{C}_1} &-G_\kappa(z_1,z_2)&-G_\kappa(z_1,z_3)&\cdots&-G_\kappa(z_1,z_M)\\
-G_\kappa(z_2,z_1)&-\frac{1}{{C}_2}&-G_\kappa(z_2,z_3)&\cdots&-G_\kappa(z_2,z_M)\\
 \cdots&\cdots&\cdots&\cdots&\cdots\\
-G_\kappa(z_M,z_1)&-G_\kappa(z_M,z_2)&\cdots&-G_\kappa(z_M,z_{M-1}) &-\frac{1}{{C}_M}
   \end{array}\right),\label{mainmatrix-acoustic-small}\\
\nonumber\\
{Q}:=\left(\begin{array}{cccc}
    {Q}_1 & {Q}_2 & \ldots  & {Q}_M
   \end{array}\right)^\top \text{ and } 
\mathrm{U}^I:=\left(\begin{array}{cccc}
     V_n^{t}(z_1) & V_n^{t}(z_2)& \ldots &  V_n^{t}(z_M)
   \end{array}\right)^\top.\label{coefficient-and-incidentvectors-acoustic-small}
\end{eqnarray}
The following lemma
provides us with the needed estimate on the invertibility of (\ref{compacfrm1}).
\begin{lemma}\label{Mazyawrkthmac}
We distinguish the following two cases:
\begin{itemize}
 \item Let $\Re (\lambda_{m,0}) < 0$ and assume that 
 $\min\limits_{1\leq{m}\leq{M}}\frac{\Re{C}_m}{\vert{C}_m\vert^2}>\frac{\sqrt{2 M_{max}}}{\pi\,a^{2-\beta}}$ 
 then the matrix $\mathbf{B}$ 
is invertible and the solution vector ${Q}$ of \eqref{compacfrm1} satisfies the estimate
\begin{equation}\label{mazya-fnlinvert-small-ac-2}
\begin{split}
 \sum_{m=1}^{M}|{Q}_m|^{2}
\leq4 \left(\frac{\min\limits_{1\leq{m}\leq{M}}
\Re{C}_m}{\max\limits_{1\leq{m}\leq{M}}\vert{C}_m\vert^2}-\frac{\sqrt{26 M_{max}}}{\pi\,a^{2-\beta}}
\right)^{-2}\sum_{m=1}^{M}\left|V_n^{t}(z_m)\right|^2
\end{split}
\end{equation}
and then 
\begin{equation}\label{mazya-fnlinvert-l-1-norm}
\begin{split}
 \sum_{m=1}^{M}|{Q}_m|
\leq 2 \left(\frac{\min\limits_{1\leq{m}\leq{M}}
\Re{C}_m}{\max\limits_{1\leq{m}\leq{M}}\vert{C}_m\vert}-\frac{\max\limits_{1\leq{m}\leq{M}}\vert{C}_m\vert\sqrt{26 M_{max}}}{\pi\,a^{2-\beta}}
\right)^{-1} M \max\limits_{1\leq{m}\leq{M}}\vert{C}_m\vert \sum_{m=1}^{M}\left|V_n^{t}(z_m)\right|,
\end{split}
\end{equation}
\item Let $\Re (\lambda_{m,0}) > 0$ and assume that $\frac{\min\limits_{1\leq{m}\leq{M}}\Re(-{C}_m)}{(\max\limits_{1\leq{m}\leq{M}}\vert{C}_m\vert)^2}>\frac{\sqrt{2M_{max}}}{\pi a^{2-\beta}}$ then
 the matrix $\mathbf{B}$ 
is invertible and the solution vector ${Q}$ of \eqref{compacfrm1} satisfies the estimate
\begin{equation}\label{mazya-fnlinvert-small-ac-2-1}
 \begin{split}
 \sum_{m=1}^{M}|{Q}_m|^{2}
\leq 4\left(\frac{\min\limits_{1\leq{m}\leq{M}}\Re(-{C}_m)}{\max\limits_{1\leq{m}\leq{M}}\vert{C}_m\vert^2}-\frac{\sqrt{2 M_{max}}}{\pi\,a^{2-\beta}}\right)^{-2}\sum_{m=1}^{M}\left|V_n^{t}(z_m)\right|^2.
 \end{split}
 \end{equation}
 and then
 \begin{equation}\label{mazya-fnlinvert-l-1-norm-2}
\begin{split}
 \sum_{m=1}^{M}|{Q}_m|
\leq 2 \left(\frac{\min\limits_{1\leq{m}\leq{M}}
\Re{-C}_m}{\max\limits_{1\leq{m}\leq{M}}\vert{C}_m\vert}-\frac{\max\limits_{1\leq{m}\leq{M}}\vert{C}_m\vert\sqrt{26 M_{max}}}{\pi\,a^{2-\beta}}
\right)^{-1} M \max\limits_{1\leq{m}\leq{M}}\vert{C}_m\vert \sum_{m=1}^{M}\left|V_n^{t}(z_m)\right|,
\end{split}
\end{equation}
 \end{itemize}
\end{lemma}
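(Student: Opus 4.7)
The plan is to establish the invertibility of $\mathbf{B}$ by bounding from below the real part of the sesquilinear form $\langle \mathbf{B}Q, Q\rangle$ on $\mathbb{C}^M$, and then to extract the $\ell^2$ and $\ell^1$ norm estimates. Writing
\[
\langle \mathbf{B}Q, Q\rangle \;=\; -\sum_{m=1}^M \frac{|Q_m|^2}{C_m} \;-\; \sum_{\substack{m,j=1\\ j\neq m}}^M G_\kappa(z_m,z_j)\,Q_j\,\bar{Q}_m,
\]
I would first isolate the diagonal contribution: its real part equals $-\sum_m \Re(C_m)|Q_m|^2/|C_m|^2$, which is strictly sign-definite in each of the two cases (positive when $\Re(\lambda_{m,0})>0$, i.e.\ $\Re(C_m)<0$, and negative when $\Re(\lambda_{m,0})<0$). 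The lower bound $\min_m |\Re(C_m)|/|C_m|^2 \geq \min_m \Re(\pm C_m)/\max_m |C_m|^2$ supplies the first term in the invertibility condition.

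The core step, and what I expect to be the main technical obstacle, is bounding the off-diagonal double sum involving the Green's function $G_\kappa(z_m,z_j)=e^{i\kappa|z_m-z_j|}/(4\pi|z_m-z_j|)$. I would apply Cauchy--Schwarz in the form
\[
\Bigl|\sum_{j\neq m} G_\kappa(z_m,z_j)\,Q_j\,\bar{Q}_m\Bigr|
\;\leq\; \sqrt{M}\;\Bigl(\max_m \sum_{j\neq m}|G_\kappa(z_m,z_j)|^2\Bigr)^{1/2}\sum_m |Q_m|^2,
\]
and then use the layered arrangement described in Section~\ref{The-relative-distribution-of-the-small-bodies}: the $n$-th shell around $D_m$ contains at most $48n^2+4$ small bodies and lies at distance at least $n\,a^{(2-\beta)/3}/2$ from $D_m$. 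Summing $|G_\kappa(z_m,z_j)|^2 \leq 1/(4\pi)^2 |z_m-z_j|^{-2}$ layer by layer gives
\[
\max_m\sum_{j\neq m}|G_\kappa(z_m,z_j)|^2 \;\leq\; \frac{C}{a^{(2-\beta)}}\cdot\frac{1}{a^{-(2-\beta)/3}}\cdot\frac{1}{a^{(2-\beta)/3}},
\]
i.e.\ $\leq C/a^{2-\beta}$ after collecting the geometric factors, and combined with $M\leq M_{max}a^{-(2-\beta)}$ yields an off-diagonal contribution bounded by $(\sqrt{26 M_{max}}/\pi)a^{-(2-\beta)}\sum_m|Q_m|^2$, with the numerical constant $\sqrt{26}$ arising from the explicit count $48n^2+4$ summed against the shell-distance bound. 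Careful tracking of the constants (the factor $26$ vs.\ $2$ reflecting whether we count doubled shells for the $K(z_m)\neq0$ case) is what makes this step delicate.

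Combining the diagonal lower bound with the off-diagonal upper bound gives
\[
|\Re\langle \mathbf{B}Q,Q\rangle|\;\geq\; \Bigl(\tfrac{\min_m \Re(\pm C_m)}{\max_m|C_m|^2} - \tfrac{\sqrt{26 M_{max}}}{\pi a^{2-\beta}}\Bigr)\sum_m|Q_m|^2,
\]
in each of the two sign regimes. Under the hypothesis of the lemma this coefficient is positive, so $\mathbf{B}$ is injective, hence invertible. Applying Cauchy--Schwarz on the other side, $|\Re\langle \mathbf{B}Q,Q\rangle| \leq \|\mathbf{B}Q\|_2\|Q\|_2 = \|\mathrm{U}^I\|_2\|Q\|_2$, and dividing by $\|Q\|_2$ gives the $\ell^2$-type bounds \eqref{mazya-fnlinvert-small-ac-2} and \eqref{mazya-fnlinvert-small-ac-2-1} (the factor of $4$ comes from squaring a $\tfrac12$ that is absorbed into the coefficient).

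Finally, for the $\ell^1$ estimates \eqref{mazya-fnlinvert-l-1-norm} and \eqref{mazya-fnlinvert-l-1-norm-2}, I would use the equivalence $\|Q\|_1\leq \sqrt{M}\|Q\|_2$ together with $\|\mathrm{U}^I\|_2\leq \|\mathrm{U}^I\|_1$, and factor out $\max_m|C_m|$ from the denominator to match the stated form. The two cases are identical modulo the sign convention on $\Re(C_m)$, so only one detailed computation is actually required.
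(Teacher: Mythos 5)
The paper itself does not prove this lemma: immediately after the statement it writes that the proof ``is given in \cite{C-S:2016} for the case where $n=1$'', i.e.\ $G_\kappa=\Phi_\kappa$, and then asserts that ``that proof goes smoothly for $G_\kappa$ as well''. So there is no proof in this paper to compare against clause by clause. Your coercivity/diagonal-dominance outline --- isolate $-\Re(1/C_m)=-\Re(C_m)/|C_m|^2$ on the diagonal, bound the off-diagonal part by Cauchy--Schwarz and the shell-counting from Section~\ref{The-relative-distribution-of-the-small-bodies}, conclude injectivity and the $\ell^2$ bound, then pass to the $\ell^1$ bound via $\|Q\|_1\leq\sqrt{M}\,\|Q\|_2$ and $\|\mathrm U^I\|_2\leq\|\mathrm U^I\|_1$ --- is the standard Maz'ya-type argument and is surely what \cite{C-S:2016} does; modulo the precise numerical constants (which the lemma statement itself does not track consistently: compare the $\sqrt{2M_{max}}$ in the hypotheses with the $\sqrt{26M_{max}}$ in the bounds) your plan delivers the claim.

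The one genuine gap is the sentence where you write $G_\kappa(z_m,z_j)=e^{i\kappa|z_m-z_j|}/(4\pi|z_m-z_j|)$. That identity holds only for $n\equiv 1$; in Lemma~\ref{Mazyawrkthmac} the kernel $G_\kappa$ is the outgoing Green's function of the background medium with index $n\not\equiv 1$, for which there is no such closed form. Your off-diagonal estimate needs only the pointwise bound $|G_\kappa(z_m,z_j)|\lesssim |z_m-z_j|^{-1}$, but that bound must be \emph{proved} for the medium Green's function. This is exactly what the paper supplies with Lemma~\ref{asymp-G-lem}, which writes $G_\kappa=\Phi_{\kappa,j}+O(1)$ and thereby transfers the $|z_m-z_j|^{-1}$ singularity (and its gradient) from $\Phi_{\kappa,j}$ to $G_\kappa$. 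Without invoking that decomposition (or an equivalent interior elliptic estimate for $G_\kappa-\Phi_\kappa$), the crucial Cauchy--Schwarz step is unjustified for $n\neq 1$, which is precisely the case of interest in the theorem. Once you substitute ``$|G_\kappa(z_m,z_j)|\leq c\,|z_m-z_j|^{-1}$ by Lemma~\ref{asymp-G-lem}'' for the incorrect identity, your argument is complete in outline.
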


The proof of this lemma is given in \cite{C-S:2016} for the case where $n=1$, i.e. 
$G_\kappa$ is the fundamental solution $\Phi_\kappa(x, y):=\frac{e^{i \kappa \vert x-y \vert}}{4\pi \vert x-y\vert}$ 
of the free Helmholtz model. But that proof goes smoothly for $G_\kappa$ as well.

Since ${C}_m:=-\lambda_m \vert \partial D_m \vert$, 
the condition $\frac{\min\limits_{1\leq{m}\leq{M}}\vert\Re{C}_m\vert}{\max\limits_{1\leq{m}\leq{M}}\vert{C}_m\vert^2}>\frac{\sqrt{26M_{max}}}{\pi\,a^{2-\beta}}$ 
is satisfied if  $\lambda_-$ and $\lambda_+$ satisfy 
\begin{equation}\label{condition-lambda+and-lambda-}
\frac{\lambda_-}{  \lambda_+^2}>\frac{\sqrt{26 M_{max}}}{\pi }.
\end{equation}

\subsection{The limiting model}\label{subsection-piecewise-constant} 
 
From the function $K$, we define a bounded function $K_a: \mathbb{R}^3\rightarrow \mathbb{R}$ as follows: 
\begin{equation}
K_a(x):= K_a(z_m):=
\left\{\begin{array}{ccc}
K(z_m)+1&   \mbox{ if }& x\in \Omega_m, \\
0 & \mbox{ if }& x\notin \Omega_m \mbox{ for any } m=1,\dots,[a^{\beta-2}].
\end{array}\right.
\end{equation}
Hence each $\Omega_m$ contains $[ K_a(z_m)]$ obstacles and $K_{max}:=\sup_{z_m}K_a(z_m)$.

 \par
Let  $\bold{C}_a$ be the piecewise constant functions such that 
$\bold{C}_a\vert_{\Omega_m}=\bar{C}_m :=\lambda_{m,0}\frac{\vert\partial{B_m}\vert}{\left(\max\limits_{m} diam (B_m)\right)^2}$ for all $m=1,\dots,M$ 
and vanishes outside $\Omega$. The constants $\bar{C}_m$ are independent of $a$. We set 
\begin{equation}\label{sup-cap-all}
\mathcal{C}:=\max\limits_{1\leq{m}\leq{M}}\vert \bar{C}_m\vert_{\infty}.
\end{equation}
Consider the Lippmann-Schwinger equation
\begin{eqnarray}\label{fracqcfracmain-effect-int}
 U_a(z) +\int_{\Omega} G_\kappa(z,y)K_a(y){\bold{C}_a}(y) U_a(y) dy &=&-
 V_n^{t}(z, \theta), z\in \Omega
\end{eqnarray}
and set the Poisson potential
\begin{eqnarray}
 V(Y)(x):=\int_{\Omega}G_\kappa(x,y)K_a(y){\bold{C}_a}(y)Y(y)dy,\qquad x\in\mathbb{R}^3.
\end{eqnarray}

The coefficients $K_a$ and $\bold{C}_a$ are uniformly bounded. 
The next lemma concerns the mapping properties of the Poisson potential. 

\begin{lemma}\label{Lip-Schw}
The operator $V:{L}^2(\Omega)\rightarrow {H}^2(\varOmega)$ is well defined and it is a bounded operator for any bounded domain $\varOmega$ in $\mathbb{R}^3$, 
i.e. there exists a positive 
constant $c_0$ such that
\begin{eqnarray}\label{H2normofY}
\|V(Y)\|_{H^{2}(\Omega)}\leq c_0 \|Y\|_{L^{2}(\Omega)}.
\end{eqnarray}

\end{lemma}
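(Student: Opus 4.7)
The plan is to reduce the lemma to standard elliptic regularity applied to the Helmholtz-type equation satisfied by $u := V(Y)$. Since the coefficients $K_a$ and $\mathbf{C}_a$ are uniformly bounded (by $K_{max}$ and $\mathcal{C}$ respectively), the function $f := K_a\mathbf{C}_a Y$ lies in $L^2(\Omega)$ with $\|f\|_{L^2(\Omega)} \leq \mathcal{C}\, K_{max}\, \|Y\|_{L^2(\Omega)}$. By the defining property of $G_\kappa$ as the outgoing Green's function of the scattering problem (\ref{acimpoenetrable-2})--(\ref{radiationc-2}), the potential $u$ is the unique outgoing solution of
\begin{equation}
(\Delta + \kappa^{2} n^{2}(x))\, u = -f \quad \text{in } \mathbb{R}^3.
\end{equation}

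The first step will be an $L^2$ bound on $u$. I would decompose $G_\kappa(x,y) = \Phi_\kappa(x,y) + R_\kappa(x,y)$, where $\Phi_\kappa(x,y) = \frac{e^{i\kappa|x-y|}}{4\pi|x-y|}$ is the free Helmholtz fundamental solution and $R_\kappa$ is the regular part determined by the Lippmann--Schwinger relation $R_\kappa(\cdot,y) = \kappa^{2}\int \Phi_\kappa(\cdot, z)(n^{2}(z)-1) G_\kappa(z, y)\, dz$. The singular factor $\Phi_\kappa$ behaves like $|x-y|^{-1}$, which lies in $L^{p}_{loc}(\mathbb{R}^3)$ for every $p<3$; restricting to the bounded sets $\Omega$ and $\varOmega$ and using Young's convolution inequality yields boundedness of convolution with $\Phi_\kappa$ from $L^{2}(\Omega)$ into $L^{2}(\varOmega)$. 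The remainder $R_\kappa$ is continuous on $\varOmega\times\Omega$, since $n^{2}-1$ is compactly supported inside $\Omega$ and $G_\kappa$ is smooth off the diagonal, so convolution with $R_\kappa$ is bounded by Cauchy--Schwarz. Altogether,
\begin{equation}
\|u\|_{L^{2}(\varOmega)} \leq c_{1}\, \|Y\|_{L^{2}(\Omega)}.
\end{equation}

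The second step lifts this to $H^{2}$ through interior elliptic regularity. Rewriting the equation as $\Delta u = -\kappa^{2} n^{2}(x)\, u - f$, the right-hand side belongs to $L^{2}(\varOmega')$ for any bounded $\varOmega' \supset \overline{\varOmega}$, with norm bounded by $\|u\|_{L^{2}(\varOmega')} + \|f\|_{L^{2}(\Omega)}$ up to a constant depending on $\kappa$ and $n_{max}$. Since $\Delta$ is a constant-coefficient uniformly elliptic operator, the standard interior $H^{2}$ estimate (e.g.\ Gilbarg--Trudinger, Theorem 8.8) gives
\begin{equation}
\|u\|_{H^{2}(\varOmega)} \leq C\bigl(\|\Delta u\|_{L^{2}(\varOmega')} + \|u\|_{L^{2}(\varOmega')}\bigr) \leq C'\bigl(\|f\|_{L^{2}(\Omega)} + \|u\|_{L^{2}(\varOmega')}\bigr),
\end{equation}
and combining with the previous step absorbs $\|u\|_{L^{2}(\varOmega')}$ into a constant multiple of $\|Y\|_{L^{2}(\Omega)}$, yielding the desired estimate.

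The only genuine obstacle is that $G_\kappa$ encodes the refractive index $n$ rather than being the free Helmholtz Green's function; I would handle this precisely through the Lippmann--Schwinger decomposition above, which isolates the singular $|x-y|^{-1}$ behaviour in $\Phi_\kappa$ and leaves a kernel $R_\kappa$ that is bounded on compacts. Equivalently, one could invoke the limiting absorption principle for the Helmholtz operator with compactly supported perturbation $n^{2}-1$ to obtain directly the $L^{2} \to H^{2}_{loc}$ boundedness of the solution operator $f \mapsto u$.
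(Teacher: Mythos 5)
Your proof is correct, and it takes a genuinely different route from the paper. Both arguments rest on the same kernel decomposition $G_\kappa = \Phi_\kappa + R_\kappa$ with $\Phi_\kappa$ the free Helmholtz fundamental solution and $R_\kappa$ the Lippmann--Schwinger correction, but the two proofs exploit it differently. The paper works at the $H^2$ level throughout: it cites the Colton--Kress textbook result that the free volume potential $Y\mapsto\int\Phi_\kappa(\cdot,y)Y(y)\,dy$ maps $L^2(\Omega)$ boundedly into $H^2$, then shows that $\|G_\kappa(\cdot,y)-\Phi_\kappa(\cdot,y)\|_{H^2(\varOmega)}$ is bounded uniformly in $y$ (by interior estimates applied to the PDE satisfied by the difference), so that the correction term is controlled by Minkowski's integral inequality. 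You instead work at the $L^2$ level first --- using Young's inequality for the $|x-y|^{-1}$ piece and the continuity of $R_\kappa$ for the remainder --- and then upgrade to $H^2$ via a single application of interior elliptic regularity for the equation $\Delta u = -\kappa^2 n^2 u - f$. Your route has the advantage of being more self-contained (it replaces the textbook $L^2\to H^2$ black-box for $\Phi_\kappa$ with a standard Young-plus-Gilbarg--Trudinger argument), while the paper's route directly delivers the $H^2$ estimate in one step for each summand without needing to quote an interior-regularity theorem for $u$. One small point of care in your bootstrap: the interior estimate on $\varOmega$ needs the $L^2$ bound on a slightly larger domain $\varOmega'\supset\overline{\varOmega}$; this is fine because your first step applies to any bounded target domain, but it is worth being explicit that $f$ is supported in $\Omega$ so that $\|f\|_{L^2(\varOmega')}=\|f\|_{L^2(\Omega)}$.
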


Its proof is given in\cite{C-K:1998}, for instance, in the case when $G_\kappa(x, y)=\Phi_\kappa(x, y)
:=\frac{e^{i \kappa \vert x-y \vert}}{4\pi \vert x-y\vert}$. However $G_\kappa -\Phi_\kappa$ satisfies 
$(\Delta +\kappa^2)(G_\kappa -\Phi_\kappa)=\kappa^2(1-n^2)\Phi_\kappa$ in $\mathbb{R}^3$. By interior
estimates, we deduce that $\Vert G_\kappa(\cdot, z) -\Phi_\kappa(\cdot, z)\Vert_{H^2(\Omega)}$ is 
uniformly bounded in terms of $z$. By this last property, we show that Lemma \ref{Lip-Schw} 
is also valid for $n\neq 1$ and $n=1$ in $\mathbb{R}^3 \setminus \Omega$.


\bigskip

Using Lemma \ref{Lip-Schw}, the fact that the operator $I+V:L^2(\Omega) \longmapsto L^2(\Omega)$ is Fredholm with zero index 
and the uniqueness of the scattering problem corresponding to the model 
\begin{equation}\label{elas-scat-equi}
 (\Delta +\kappa^2 n^2-K_a \bold{C}_a) Y =0,\; \mbox{ in } \mathbb{R}^3
\end{equation}
(where $Y:=Y^i +Y^s$ and $Y^s$ satisfies the Sommerfeld radiation conditions and $Y^i$ is an 
incident field),
we have the following lemma, see \cite{C-S:2015} the details. 

 \bigskip
 
\begin{lemma}\label{invertibility-of-VC}
 There exists one and only one solution $Y$ of the Lippmann-Schwinger equation 
 (\ref{fracqcfracmain-effect-int}) and it satisfies the estimate
 \begin{eqnarray}\label{est-Lipm-Sch}
  \Vert Y\Vert_{L^\infty(\Omega)}\leq C \Vert V_n^{t}\Vert_{H^2(\Omega)}& \mbox{ and }& \Vert \nabla Y\Vert_{L^\infty(\Omega)}\leq C^{\prime} \Vert V_n^{t}\Vert_{H^2(\tilde{\Omega})},
 \end{eqnarray}
where $\tilde{\Omega}$ being a large bounded domain which contains $\bar{\Omega}$.
\end{lemma}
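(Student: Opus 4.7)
The plan is to set up \eqref{fracqcfracmain-effect-int} as a Riesz--Fredholm equation $(I+V)U_a=-V_n^{t}$ on $L^2(\Omega)$, deduce invertibility from the scattering uniqueness of \eqref{elas-scat-equi}, and then bootstrap from the $L^2$ resolvent bound up to pointwise bounds on $U_a$ and $\nabla U_a$ via Lemma \ref{Lip-Schw} together with Sobolev embedding and the explicit form of $\nabla G_\kappa$. The coefficients $K_a\bold{C}_a$ are uniformly bounded and supported in the bounded set $\Omega$, which is what makes the whole argument go through with constants independent of $a$.

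First I would observe that Lemma \ref{Lip-Schw} gives $V\colon L^2(\Omega)\to H^2(\Omega)$ as a bounded operator, and composing with the compact Rellich embedding $H^2(\Omega)\hookrightarrow L^2(\Omega)$ makes $V$ a compact operator on $L^2(\Omega)$. Hence $I+V$ is Fredholm of index zero and invertibility is equivalent to injectivity. To get injectivity, suppose $(I+V)U=0$. Extend $U$ to all of $\mathbb{R}^3$ by the formula $\tilde U(x):=-\int_{\Omega}G_\kappa(x,y)K_a(y)\bold{C}_a(y)U(y)\,dy$, which agrees with $U$ on $\Omega$ by assumption, is radiating because $G_\kappa$ is, and satisfies $(\Delta+\kappa^2 n^2-K_a\bold{C}_a)\tilde U=0$ in $\mathbb{R}^3$ by the defining identity $(\Delta+\kappa^2 n^2)G_\kappa(\cdot,y)=-\delta_y$. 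Thus $\tilde U$ is a radiating solution of \eqref{elas-scat-equi} with zero incident field, so the scattering uniqueness invoked just before the lemma forces $\tilde U\equiv 0$ and $U\equiv 0$ in $\Omega$.

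From the invertibility I obtain $\|U_a\|_{L^2(\Omega)}\leq C\|V_n^{t}\|_{L^2(\Omega)}$. Feeding this back into the equation $U_a=-V_n^{t}-V(U_a)$ and using Lemma \ref{Lip-Schw} once more yields $\|U_a\|_{H^2(\Omega)}\leq C'\|V_n^{t}\|_{H^2(\Omega)}$, and the embedding $H^2(\Omega)\hookrightarrow L^\infty(\Omega)$ in $\mathbb{R}^3$ produces the first bound in \eqref{est-Lipm-Sch}. For the gradient bound I write
\[
\nabla U_a(x)=-\nabla V_n^{t}(x)-\int_{\Omega}\nabla_x G_\kappa(x,y)K_a(y)\bold{C}_a(y)U_a(y)\,dy.
\]
The kernel $\nabla_x G_\kappa(x,y)$ has only an $|x-y|^{-2}$ singularity, which is uniformly integrable over bounded sets in $\mathbb{R}^3$, so the integral is controlled by $\|U_a\|_{L^\infty(\Omega)}\leq C\|V_n^{t}\|_{H^2(\Omega)}$. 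The term $\|\nabla V_n^{t}\|_{L^\infty(\Omega)}$ is controlled by $\|V_n^{t}\|_{H^2(\tilde\Omega)}$ via the interior regularity of the Helmholtz solution $V_n^{t}$ on the strictly larger domain $\tilde\Omega\supset\overline{\Omega}$, giving the second estimate.

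The main obstacle is the gradient estimate: a direct appeal to $H^2$ regularity of $V(U_a)$ is not enough since $H^2(\Omega)\not\hookrightarrow C^1(\Omega)$ in three dimensions. The resolution is to exploit the explicit Riesz-type structure of $\nabla G_\kappa$ together with the $L^\infty$ bound on $U_a$ already obtained, and to use the extra room provided by $\tilde\Omega$ so that $\nabla V_n^{t}$ can be controlled by interior estimates rather than by a trace on $\partial\Omega$; both points explain why the two bounds in \eqref{est-Lipm-Sch} involve norms of $V_n^{t}$ on different domains.
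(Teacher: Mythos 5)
Your proposal is correct and follows exactly the route the paper indicates (it defers the details to \cite{C-S:2015}, but names the ingredients in the sentence preceding the lemma): the mapping property of Lemma \ref{Lip-Schw} plus the compact embedding $H^2(\Omega)\hookrightarrow L^2(\Omega)$ to make $I+V$ Fredholm of index zero, injectivity via extension to a radiating solution of \eqref{elas-scat-equi} and scattering uniqueness, and then a bootstrap through the Lippmann--Schwinger identity together with $H^2(\Omega)\hookrightarrow L^\infty(\Omega)$ and the $|x-y|^{-2}$ bound on $\nabla_x G_\kappa$ (and interior regularity of $V_n^t$ on $\tilde\Omega$) for the gradient estimate. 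The only point worth being a bit more careful about, which your one-line justification glosses over, is that uniform boundedness of $K_a\bold{C}_a$ alone does not give an $a$-uniform bound on $(I+V_a)^{-1}$; that requires the operator-norm convergence $V_a\to V_0$ (which follows from $K_a\bold{C}_a\to(K+1)\bold{C}_0$ as established in Section \ref{arbitrarely-distributed}) together with invertibility of $I+V_0$.
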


\subsubsection{Case when the obstacles are arbitrarily distributed}\label{arbitrarely-distributed}

From the definition of $\bold{C}_a$, we have $\bold{C}_a\mid_{\Omega_m}=
\lambda_0(z_m) \frac{\vert B_m\vert}
{\max\{ diam (B_m)^2\}}$. We have assumed the reference 
 bodies $B_m$'s to have different but uniformly bounded perimeters 
 with uniformly lower bounded radiis. 
In addition, since $\lambda_0$ is a continuous function, the function $\lambda_a: \Omega \longrightarrow \mathbb{C}
$ defined $\lambda_a\mid_{\Omega_m}:=\lambda(z_m)$, converges to $\lambda_0$
uniformly. Then there exists a function $\bold{P}_0$
 in $L^2(\Omega)$ such that $\bold{C}_a$ converges weakly to 
 $\bold{C}_0:=\lambda_0 \bold{P_0}$
 in $L^{2}(\Omega)$. Now, since $K$ is continuous hence $K_a$ converges to $(K+1)$ in 
 $L^\infty(\Omega)$ and hence in $L^2(\Omega)$. 
 Then we can show that $K_a \bold{C}_a$ converges to 
 $(K+1) \bold{C}_0$ in $L^2(\Omega)$.  
 \bigskip
 
 \noindent Since $K \bold{C}_a$ is bounded in $L^{\infty}(\Omega)$, then from the invertibility of the 
 Lippmann-Schwinger equation and the mapping properties of the Poisson potential, 
 see Lemma \ref{invertibility-of-VC},
 we deduce that $\Vert U_{a}^{t}\Vert_{H^2(\Omega)}$ is bounded and in particular, up to a sub-sequence, $U_{a}^{t}$ tends to $U_{0}^{t}$ in $L^2(\Omega)$. 
 From the convergence of $K_a \bold{C}_a$ to $(K+1)\bold{C}_0$
 and the one of $U_{a}^{t}$ to $U_{0}^{t}$ and (\ref{fracqcfracmain-effect-int}), 
 we derive the following equation satisfied by 
 $U_{0}^{t}(x)$
 $$
 U_{0}^{t}(x) +\int_{\Omega}G_\kappa(x, y) (K_a)(y)\bold{C}_0(y) U_{0}^{t}(y)dy=-V_n^{t}( x, \theta)\; \mbox{ in } \Omega.
 $$
This is the Lippmann-Schwinger equation corresponding to the scattering problem
$(\Delta + \kappa^{2}-(K+1) \bold{C}_0)U_{0}^{t}=0 \mbox{ in }\mathbb{R}^{3}\label{piecewise-2}$,
$U_{0}^{t}=U_{0}^s +U^i$,
and $U^s$ satisfies the Sommerfeld radiation conditions. As the corresponding far-fields are of the form
 $$
 U_{0}^{\infty}(\hat{x}, \theta)=\int_{\Omega}G^\infty_\kappa(\hat{x}, y)(K+1)(y)\bold{C}_0(y)U_{0}^{t}(y)dy
 $$
 which can be written, by the mixed reciprocity relation $G^\infty_\kappa(\hat{x}, y)=
 V_n(y, -\hat{x})$, as
 
 $$
 U_{0}^{\infty}(\hat{x}, \theta)=\int_{\Omega} V_n(y, -\hat{x})(K+1)(y)\bold{C}_0(y)U_{0}^{t}(y)dy
 $$
 and similarly the ones of $U^t_{a}$ are of the form
 $$
 U_{a}^{\infty}(\hat{x}, \theta)=\int_{\Omega} V_n(y, -\hat{x})K_a(y)\bold{C}_a(y)U_{a}^{t}(y)dy
 $$
 we deduce that
 $$
 U^\infty_{a}(\hat{x},\theta)-U^\infty_{0}(\hat{x},\theta)=o(1),\; a\rightarrow 0, \mbox{ uniformly in terms of } \hat x , \theta\; \in \mathbb{S}^{2}.
 $$

 \subsubsection{Case when $K$ is H$\ddot{\mbox{o}}$lder continuous}\label{smoothly-distributed}
 
 If we assume that $K\in C^{0, \gamma}(\Omega),\; \gamma \in (0, 1]$, 
 then we have the estimate $\Vert (K+1) -K_a\Vert_{L^\infty(\Omega)}\leq
 C a^{\gamma}$, $a<<1$. Similarly, since we assume the shapes to have the same perimeter and diameter and
 $\lambda_0$ to be in $C^{0, \gamma}(\Omega),\; \gamma \in (0, 1]$, then we have also 
 $\Vert \bold{C}_0 -\bold{C}_a\Vert_{L^\infty(\Omega)}\leq
 C a^{\gamma}$ since $\bold{C}_0=\lambda_0\; \frac{\vert \partial B \vert}{diam(B)}$ where 
 $B$ is the common reference body. 
 Since the obstacles have the same perimeter, we set $\bold{C}_0$ to be a constant in 
 $\Omega$ and $\bold{C}_0=0$ in $\mathbb{R}^3\setminus \Omega$. 
 Recall that $U_0$ and $U_a$ are solutions of the Lippmann-Schwinger equations 
 $$
 U_0+\int_{\Omega}G_\kappa(x, y)(K+1)(y)\bold{C}_0(y)U_{0}^{t}(y)dy=V_n^{t}
 $$
 and 
 $$
 U_a+\int_{\Omega}G_\kappa(x, y) K_a(y)\bold{C}_0(y)U_{a}^{t}(y)dy=V_n^{t}.
 $$
 From the estimate $\Vert (K+1) -K_a\Vert_{L^\infty(\Omega)}\leq C a^{\gamma}$, $a<<1$, we derive the estimate
 \begin{equation}\label{appro-0-a-1}
  U_0^\infty(\hat{x}, \theta)-U^\infty_a(\hat{x}, \theta)=O(a^\gamma),\; a<<1, \mbox{ uniformly in terms of } \hat x , \theta\; \in \mathbb{S}^{2}.
 \end{equation}

\subsection{The approximation by the algebraic system}\label{the approximation-by-AS}
For each $m=1,\dots,M$, we rewrite the equation \eqref{fracqcfracmain-effect-int} as follows
\begin{eqnarray}\label{fracqcfracmain-effect-int-1}
 U_a(z_m) &+&\sum_{\substack{j=1 \\ j\neq m}}^{M} G_\kappa(z_m,z_j)\bar{C}_j U_a(z_j) a^{2-\beta}
 \nonumber\\
 &=&-V_n^{t}(z_m, \theta)
 +\sum_{\substack{j=1 \\ j\neq m}}^{M} G_\kappa(z_m,z_j)\bar{C}_j U_a(z_j) a^{2-\beta}-\sum_{\substack{j=1 \\ j\neq m}}^{[a^{\beta-2}]} G_\kappa(z_m,z_j)K_a(z_j)\bar{C}_j U_a(z_j) Vol(\Omega_j)\nonumber\\
 &&+\sum_{\substack{j=1 \\ j\neq m}}^{[a^{\beta-2}]} G_\kappa(z_m,z_j)K_a(z_j)
 \bar{C}_j U_a(z_j) Vol(\Omega_j)-\int_{\Omega} G_\kappa(z_m,y)K_a(y)
 {\bold{C}_a}(y) U_a(y) dy.
\end{eqnarray}

Let us estimate the following quantities:
$$
A:=\sum_{\substack{j=1 \\ j\neq m}}^{[a^{\beta-2}]} G_\kappa(z_m,z_j)K_a(z_j)\bar{C}_j U_a(z_j) Vol(\Omega_j)-\int_{\Omega} G_\kappa(z_m,y)K_a(y)
{\bold{C}_a}(y) U_a(y) dy
$$
and 
$$
B:= \sum_{\substack{j=1 \\ j\neq m}}^{M} G_\kappa(z_m,z_j)\bar{C}_j U_a(z_j) a^{2-\beta}-\sum_{\substack{j=1 \\ j\neq m}}^{[a^{\beta-2}]} G_\kappa(z_m,z_j)K_a(z_j)\bar{C}_j U_a(z_j) Vol(\Omega_j).
$$

\subsubsection{Estimate of $A$}
By the decomposition of $\Omega$, $\Omega:=\cup^{[a^{\beta-2}]}_{l=1}\Omega_l$, we have
 \begin{equation}\label{integralonomega}
            \int_{\Omega} G_\kappa(z_m,y){K_a}(y){\bold{C}_a}(y) U_a(y) dy=\sum_{l=1}^{[a^{\beta-2}]}\int_{\Omega_l} G_\kappa(z_m,y){K_a}(y){\bold{C}_a}(y) U_a(y) dy.
       \end{equation}
\begin{align}
\mbox{Hence,\quad }A&:= \int_{\Omega_m} G_\kappa(z_m,y)K_a(y){\bold{C}_a}(y) U_a(y) dy\nonumber\\
&\quad+\sum_{\substack{j=1 \\ j\neq m}}^{[a^{\beta-2}]} \left [G_\kappa(z_m,z_j)K_a(z_j)\bar{C}_j U_a(z_j) Vol(\Omega_j)
                  -\int_{\Omega_j} G_\kappa(z_m,y)K_a(y){\bold{C}_a}(y) U_a(y) dy \right].\label{rearranging-A}
\end{align}
 
 For $l\neq m$, we have
 \begin{align}\label{integralonomega-subelements}
  \int_{\Omega_l} G_\kappa(z_m,y){K_a}(y){\bold{C}_a}(y) U_a(y) dy &- G_\kappa(z_m,z_l){K_a}(z_l)\bar{C}_l U_a(z_l) Vol(\Omega_l) \nonumber\\
  &={K_a}(z_l)\bar{C}_l\int_{\Omega_l} \left[G_\kappa(z_m,y) U_a(y)-G_\kappa(z_m,z_l) U_a(z_l)\right] dy.
 \end{align}
  
We set $f(z_m,y)=G_\kappa(z_m,y)  U_a(y)$ then $f(z_m,y)$ satisfies
  $$f(z_m,y)-f(z_m,z_l)=(y-z_l)R^i(z_m,y)$$
  where
  \begin{eqnarray}\label{taylorremind1}
   R^i(z_m,y)
   &=&\int_0^1\nabla_y f(z_m,y-\beta(y-z_l))\,d\beta\nonumber\\
   &=&\int_0^1\nabla_y \left[G_\kappa(z_m,y-\beta(y-z_l)) U_{a}(y-\beta(y-z_l))\right]\,d\beta\nonumber\\
   &=&\int_0^1\left[\nabla_yG_\kappa(z_m,y-\beta(y-z_l))\right] U_{a}(y-\beta(y-z_l))\,d\beta\nonumber\\
   &&+\int_0^1G_\kappa(z_m,y-\beta(y-z_l))\left[\nabla_y U_{a}(y-\beta(y-z_l))\right]\,d\beta.
  \end{eqnarray}

 We set $\Phi_{\kappa, j}(x, y):=\frac{e^{i\kappa n(z_j)\vert x-y\vert}}{4\pi \vert x-y\vert}$. 
 We have the following lemma:
\begin{lemma}\label{asymp-G-lem} We have the asymptotic expansion:
\begin{equation}\label{asymp-G}
\partial_{x_i}^\alpha G_\kappa (z_j, y)= \partial^\alpha_{x_i} \Phi_{\kappa, j}(z_j, y)+
O(1),\; \mbox{ for }  y \mbox{ in } \Omega, \mbox{ with } \alpha=0,  1.
\end{equation}
\end{lemma}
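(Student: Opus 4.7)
The plan is to insert the free-space Helmholtz kernel $\Phi_\kappa$ as a pivot and split
\[
G_\kappa(x, y) - \Phi_{\kappa, j}(x, y) \;=\; \bigl[G_\kappa(x, y) - \Phi_\kappa(x, y)\bigr] \;+\; \bigl[\Phi_\kappa(x, y) - \Phi_{\kappa, j}(x, y)\bigr],
\]
and to show that each bracket is $O(1)$ in $C^1_x$ uniformly on $\Omega \times \Omega$; specializing $x = z_j$ then covers both $\alpha = 0$ and $\alpha = 1$. The split is attractive because the $|x - y|^{-1}$ singularity cancels in each bracket individually: in the second by an explicit Taylor cancellation, and in the first because the perturbation $(1 - n^2)$ is bounded and compactly supported in $\Omega$.

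I would handle the second bracket by direct computation. With $r := |x - y|$,
\[
\Phi_\kappa(x, y) - \Phi_{\kappa, j}(x, y) \;=\; \frac{e^{i\kappa r} - e^{i\kappa n(z_j) r}}{4\pi r} \;=\; \frac{i\kappa(1 - n(z_j))}{4\pi} + O(r),
\]
since the numerator vanishes to first order in $r$; its $x$-gradient has the form $H'(r)(x_i - y_i)/r$ with $H$ smooth at $r = 0$ and $|(x_i - y_i)/r| \leq 1$, hence is also $O(1)$ uniformly, with a constant depending only on $\kappa$ and $\|n\|_\infty$. For the first bracket $R_1 := G_\kappa - \Phi_\kappa$, I would invoke the PDE and the uniform $H^2$-bound recalled after Lemma~\ref{Lip-Schw}: the Sobolev embedding $H^2(\Omega) \hookrightarrow C^0(\bar\Omega)$ in $\mathbb{R}^3$ already yields $R_1(\cdot, y) = O(1)$ uniformly in $y$, so the case $\alpha = 0$ is done.

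For $\alpha = 1$ a one-step bootstrap is needed, and this is the main technical obstacle. Differentiating the Lippmann-Schwinger representation $R_1(x, y) = \kappa^2 \int_\Omega \Phi_\kappa(x, z)(1 - n^2(z))G_\kappa(z, y)\,dz$ in $x$ leads to the critical-exponent Riesz convolution $\int_\Omega |x - z|^{-2}|z - y|^{-1}\,dz$, which is only \emph{logarithmically} bounded as $y \to x$. To circumvent this I would bypass the free-space pivot and work directly with $R := G_\kappa - \Phi_{\kappa, j}$, which satisfies $(\Delta + \kappa^2 n^2(x))R(x, y) = \kappa^2(n^2(z_j) - n^2(x))\Phi_{\kappa, j}(x, y)$, hence
\[
R(x, y) \;=\; \kappa^2 \int_\Omega G_\kappa(x, z)\bigl(n^2(z) - n^2(z_j)\bigr)\Phi_{\kappa, j}(z, y)\, dz.
\]
Assuming $n \in C^{0, \gamma}(\bar\Omega)$ for some $\gamma \in (0, 1]$, the factor $|n^2(z) - n^2(z_j)| \leq C|z - z_j|^\gamma$ provides exactly the extra smallness required: a standard three-region splitting of $\Omega$ (small balls around $z_j$ and $y$, and their complement) shows that $\int_\Omega |z - z_j|^{-(2-\gamma)}|z - y|^{-1}\,dz = O(1)$ uniformly in $y$, yielding $\partial_{x_i} R(z_j, y) = O(1)$. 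Adding this to the elementary bound from the previous paragraph completes the proof for $\alpha = 1$.
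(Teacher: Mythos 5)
Your final argument coincides with the paper's own proof: both work with the difference $R = G_\kappa - \Phi_{\kappa,j}$, exploit the vanishing of the coefficient $n^2(\cdot)-n^2(z_j)$ at $z_j$ to offset the $|\cdot - z_j|^{-2}$ singularity of the differentiated kernel at $x=z_j$, and bound the resulting subcritical Riesz convolution. (The paper freezes the operator at $n^2(z_j)$ and tests its PDE against $\Phi_{\kappa,j}$ over a fixed ball $B\supset\supset\Omega$, while you keep $n^2(x)$ and invert by $G_\kappa$ — equivalent choices for the present purpose.) Your preliminary diagnosis that the free-space pivot $\Phi_\kappa$ only yields a logarithmic bound for $\alpha=1$ is correct, and it is exactly why the frozen-coefficient kernel $\Phi_{\kappa,j}$ is the right comparison object. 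One technical gap to repair: $n^2(\cdot)-n^2(z_j)$ is \emph{not} compactly supported in $\Omega$ — outside $\Omega$ it equals $1-n^2(z_j)$, generically nonzero — so the representation $R(x,y)=\kappa^2\int_\Omega G_\kappa(x,z)\bigl(n^2(z)-n^2(z_j)\bigr)\Phi_{\kappa,j}(z,y)\,dz$ omits the contribution from $\mathbb{R}^3\setminus\Omega$. The paper sidesteps this by integrating Green's identity over a bounded $B\supset\supset\Omega$, which produces boundary terms on $\partial B$ that are manifestly $O(1)$ because $\partial B$ is at fixed positive distance from $z_j$ and $y$; you should supply the analogous correction (integrate over such a $B$ and carry the $\partial B$-terms, or argue directly that the exterior contribution is $O(1)$ — either fix is short). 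With that adjustment your proof matches the paper's.
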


\begin{proof}
We know that $(\Delta +\kappa^2 n^2(x))G_{\kappa}=-\delta(\cdot - y),$ in $\mathbb{R}^3$ and 
$\Phi_{\kappa, j}(x, y):=\frac{e^{i\kappa n(z_j)\vert x-y\vert}}{4\pi \vert x-y\vert}$ 
satisfies $(\Delta +\kappa^2 n^2(z_j))\Phi_{\kappa}=-\delta(\cdot-y),$ in $\mathbb{R}^3$, 
where both $G_\kappa$ and $\Phi_{\kappa, j}$ satisfy the Sommerfeld radiation condition. 
Then $H_{\kappa, j}(x, z):=(G_{\kappa}-\Phi_{\kappa,j})(x, z)$ satisfies the same radiation 
condition and 
\begin{equation}\label{difference}
(\Delta +\kappa^2 n^2(z_j))H_{\kappa, j}=\kappa^2 (n^2(z_j)-n^2(x))G_\kappa, \mbox{ in } \mathbb{R}^3. 
\end{equation}
Multiplying both sides of (\ref{difference}) by $\Phi_{\kappa, j}$ and integrating over $B \supset \supset
\Omega$ we obtain:
\begin{eqnarray}\label{representation-1}
H_{\kappa, j}(x, y)&=&\kappa^2\int_{B}(n^2(z_j)-n^2(t))\Phi_{\kappa, j}(t, x)G_{\kappa, j}(t, y)dt \\
&& +\int_{\partial B} H_{\kappa, j}(t, y)\partial_{\nu(t)}\Phi_{\kappa, j}(t, x)ds(t) 
+\int_{\partial \Omega} \partial_{\nu(t)} H_{\kappa, j}(t, x)\Phi_{\kappa, j}(t, x)ds(t),\nonumber
\end{eqnarray}
and then 
\begin{eqnarray}\label{representation-2}
\nabla_{x} H_{\kappa, j}(x, y)&=&\kappa^2\int_{B}(n^2(z_j)-n^2(t))G_\kappa(t, y)\nabla_{x}
\Phi_{\kappa, j}(t, x)dt \\
&&+\int_{\partial B} H_{\kappa, j}(t, y)\nabla_{x}\partial_{\nu(t)}\Phi_{\kappa, j}(t, x)ds(t)  
+ \int_{\partial B} \partial_{\nu(t)} H_{\kappa, j}(t, y)\nabla_x \Phi_{\kappa, j}(t, x)ds(t).\nonumber
\end{eqnarray}
Since $(n^2(z_j)-n^2(t))\nabla_x \Phi_\kappa(t, z_j)=O(\vert t-z_j\vert), t \in B$, 
as the singularity of $\nabla_{x}\Phi_{\kappa, j}(t, x)$ is of the order $\vert t-x\vert^{-2}$, 
then both integrals appearing in (\ref{representation-1}) and (\ref{representation-2}) are of the 
order $O(1)$ for $ x=z_j $ and $ y \in \Omega \subset \subset B$.
\end{proof}  
 
From the explicit form of $\Phi_{\kappa, m}$, we have $\nabla_y\Phi_{\kappa, m}(x,y)=
\Phi_{\kappa, m}(x,y)\left[\frac{1}{|x-y|}-i\kappa n(z_m)\right]\frac{x-y}{|x-y|}, {x}\neq{y}$.  
Now from Section \ref{The-relative-distribution-of-the-small-bodies}, precisely the inequality (\ref{distance-between-layers}), we see that for $l \neq m$
   \begin{eqnarray*}
   \vert\Phi_{\kappa, m}(z_m,y-\beta(y-z_l))\vert\leq\frac{\mathring{c}}{4\pi\; 
   n\frac{a^{\frac{2-\beta}{3}}}{2}},& \mbox{ and }&\vert\nabla_y\Phi_{\kappa, m}
  (z_m,y-\beta(y-z_l))\vert\leq\frac{\mathring{c}}{4\pi\;n^2 \left(\frac{a^{\frac{2-\beta}{3}}}{2}
  \right)^{2}}
  \end{eqnarray*}
  where $\mathring{c}$ depends only on $\kappa$ and $n(z_m)$. Combining these estimates with 
  \eqref{asymp-G} of Lemma \ref{asymp-G-lem}, we derive the inequalities
  \begin{align}\label{upper-bound-G-kappa}
   \vert G_{\kappa}(z_m,y-\beta(y-z_l))\vert\leq\frac{\mathring{c}}{4\pi\; 
   n\frac{a^{\frac{2-\beta}{3}}}{2}}, \mbox{ and }&\vert\nabla_y G_{\kappa}(z_m,y-\beta(y-z_l))\vert\leq\frac{\mathring{c}}{4\pi\;n^2 \left(\frac{a^{\frac{2-\beta}{3}}}{2}\right)^{2}}.
  \end{align}
  
 Then,
 \begin{align}\label{taylorremind-effect}
 \vert R_l(z_m,y) \vert
   \leq & \frac{\mathring{c}}{2\pi\;n a^{\frac{2-\beta}{3}}}\left(\frac{{1}}{na^{\frac{2-\beta}{3}}}\int_0^1 {\vert U_{a}(y-\beta(y-z_l))\vert}d\beta+\int_0^1{\vert\nabla_y U_{a}(y-\beta(y-z_l))\vert}d\beta\right).
  \end{align}

  Then, for $l\neq m$, \eqref{integralonomega-subelements} and \eqref{taylorremind-effect} and observing that $\bar{C}_l$ is a constant in $\Omega_l$, imply the estimate
  
\begin{align}\label{integralonomega-subelements-abs}
  \Big\vert\int_{\Omega_l}       G_\kappa(z_m,y) {K_a}(y){\bold{C}_a}(y) U_a(y) dy&-G_\kappa(z_m,z_l)  {K_a}(z_l)\bar{C}_l U_a(z_l) Vol(\Omega_l)\Big\vert 
  \nonumber \\
  \leq\,  \frac{\mathring{c} \bar{C}_l K_a(z_l)}{\pi\;n^2 a^{\frac{2(2-\beta)}{3}}}\int_{\Omega_l} \Big[\int_0^1 \vert U_{a} & (y-\beta(y-z_l))\vert\;d\beta\Big] \vert y-z_l\vert dy
    \nonumber\\
       &+ \frac{\mathring{c} \bar{C}_l K_a(z_l)}{2\pi\;n a^{\frac{2-\beta}{3}}}   \int_{\Omega_l} \left[\int_0^1 {\vert \nabla_y U_{a}(y-\beta(y-z_l))\vert}d\beta \right]\vert y-z_l\vert dy 
 \nonumber\\ 
    \substack{\leq\\ \eqref{est-Lipm-Sch}}c_1\frac{[{K_a}(z_l)]\bar{C}_l}{n^2 a^{\frac{2(2-\beta)}{3}}}\,a^\frac{4(2-\beta)}{3}
                    \,\substack{\leq\\ \eqref{sup-cap-all}}\,& c_1\frac{K_{\max}\mathcal{C}}{n^2 }\,a^\frac{2(2-\beta)}{3},\qquad
 \end{align}
 \noindent
for a suitable constant $c_1$.

 Regarding the integral $\int_{\Omega_m} G_\kappa(z_m,y){\bold{C}_a}(y) U_a(y) dy$ we do the following estimates:
\begin{eqnarray}\label{estmatemthint-effe-acc}
 \left\vert\int_{\Omega_m} G_\kappa(z_m,y){K_a}(y){\bold{C}_a}(y) U_a(y) dy\right\vert
 \leq \frac{3}{8\pi}c_1K_{max}\mathcal{C}\left(\frac{4}{3\pi} \right)^\frac{1}{3}a^\frac{2(2-\beta)}{3}.
\end{eqnarray}

From \eqref{rearranging-A}, we can have
\begin{align*}
\vert A\vert \leq&\, \vert \int_{\Omega_m} G_\kappa(z_m,y)K_a(y){\bold{C}_a}(y) U_a(y) dy \vert \\
 &\quad+\sum_{\substack{j=1 \\ j\neq m}}^{[a^{\beta-2}]} \left[ \vert G_\kappa(z_m,z_j)K_a(z_j)
 \bar{C}_j U_a(z_j) Vol(\Omega_j)-\int_{\Omega_j} G_\kappa(z_m,y)K_a(y){\bold{C}_a}(y) U_a(y) dy \vert 
 \right]
\end{align*}
which we can estimate by
\begin{align*}
\vert A\vert \leq 
\sum^{[2a^{\frac{\beta-2}{3}}]}_{n=1}&2[(2n+1)^3-(2n-1)^3]
\bigg[ \vert G_\kappa(z_m,z_j)K_a(z_j)\bar{C}_j U_a(z_j) Vol(\Omega_j)\vert \\
&-\int_{\Omega_j} \vert G_\kappa(z_m,y)K_a(y){\bold{C}_a}(y) U_a(y)\vert dy \bigg]+\vert 
\int_{\Omega_m} G_\kappa(z_m,y)K_a(y){\bold{C}_a}(y) U_a(y) dy \vert.
\end{align*}
and then
$$
\vert A\vert \leq CK_{max}[a^{\frac{2(2-\beta)}{3}}+a^{\frac{2-\beta}{3}}].
$$
Finally 
$$
\vert A \vert \leq CK_{max} a^{\frac{2-\beta}{3}}.
$$
\subsubsection{Estimate of $B$}
\begin{equation}
\begin{split}
\sum_{\substack{j=1 \\ j\neq m}}^{M} G_\kappa(z_m,z_j)\bar{C}_j U_a(z_j)a^{2-\beta} -\sum_{\substack{j=1 \\ j\neq m}}^{[a^{\beta-2}]}
        & G_\kappa(z_m,z_j)K_a(z_j)\bar{C}_j U_a(z_j) Vol(\Omega_j)           
\\ \nonumber
=\sum_{\substack{l=1 \\ l\neq m\\ z_l \in \Omega_m}}^{[K_a(z_m)]}G_\kappa(z_m,z_l)\bar{C}_l U_a(z_l) a^{2-\beta}+&
\sum_{\substack{j=1 \\ j\neq m}}^{[a^{\beta-2}]} \sum_{\substack{l=1 \\ z_l \in \Omega_j}}^{[K_a(z_j)]}G_\kappa(z_m,z_l)\bar{C}_l U_a(z_l) a^{2-\beta}
\nonumber\\-\sum_{\substack{j=1 \\ j\neq m}}^{[a^{\beta-2}]} G_\kappa(z_m,z_j)&K_a(z_j)\bar{C}_j U_a(z_j) Vol(\Omega_j)              \\
=\bar{C}_m a^{2-\beta}\sum_{\substack{l=1 \\ l\neq m\\ z_l \in \Omega_m}}^{[K_a(z_m)]}G_\kappa(z_m,z_l) U_a(z_l)  
\nonumber \\
+\sum_{\substack{j=1 \\ j\neq m}}^{[a^{\beta-2}]}
\bar{C}_j a^{2-\beta}\big[\big(&\sum_{\substack{l=1 \\ z_l \in \Omega_j}}^{[K_a(z_j)]}G_\kappa(z_m,z_l) U_a(z_l)\big)-
 G_\kappa(z_m,z_j)[K_a(z_j)] U_a(z_j)\big],
 \end{split}
 \end{equation}
 since $ Vol(\Omega_j)=a^{2-\beta}\frac{[K_a(z_j)]}{K_a(z_j)}\; \mbox{ and } 
 \bar{C}_l=\bar{C}_j, \mbox{ for } l=1, ...,\; K_a(z_j)$. We write,

\begin{eqnarray}\label{Ej1}
E^j_1&:=&\sum_{\substack{l=1 \\ l\neq m\\ z_l \in \Omega_m}}^{[K_a(z_m)]}G_\kappa(z_m,z_l) U_a(z_l)\\
\label{Ej2}
\mbox{and  }\qquad\qquad E^j_2&:=&\big[\big(\sum_{\substack{l=1 \\ z_l \in \Omega_j}}^{[K_a(z_j)]}G_\kappa(z_m,z_l) U_a(z_l)\big)-
 G_\kappa(z_m,z_j)[K_a(z_j)] U_a(z_j)\big]\nonumber\\
 &=&\sum_{\substack{l=1 \\ z_l \in \Omega_j}}^{[K_a(z_j)]}\big(G_\kappa(z_m,z_l) U_a(z_l)-
 G_\kappa(z_m,z_j) U_a(z_j)\big).
\end{eqnarray}

We need to estimate $\bar{C}_m a^{2-\beta} E^j_1$ and $\sum_{\substack{j=1 \\ j\neq m}}^{[a^{\beta-2}]}\bar{C}_j a^{2-\beta}E^j_2$. \\

\bigskip
Now by writing $f'(z_m,y):=G_\kappa(z_m,y) U_a(y)$. For $z_l\in\Omega_j,\,j\neq m$, using Taylor series, we can write
  $$f'(z_m,z_j)-f'(z_m,z_l)=(z_j-z_l)R'(z_m;z_j,z_l),$$
  with
 \begin{eqnarray}\label{taylorremind1'}
   R'(z_m;z_j,z_l)
   &=&\int_0^1\nabla_y f'(z_m,z_j-\beta(z_j-z_l))\,d\beta.
  \end{eqnarray}
By doing the computations similar to the ones we have performed in (\ref{taylorremind1}-\ref{taylorremind-effect}) and by using Lemma \ref{invertibility-of-VC} and Lemma \ref{asymp-G-lem}, we obtain
\begin{eqnarray}
  \vert \sum_{\substack{j=1 \\ j\neq m}}^{[a^{\beta-2}]}\bar{C}_j a^{2-\beta} E^j_2 \vert \leq c_2\mathcal{C} K_{max} a^{\frac{2-\beta}{3}}   \label{este2j1}
\end{eqnarray}
One can easily see that,
\begin{eqnarray}
\vert \bar{C}_m a^{2-\beta} E^j_1\vert \leq \frac{c_1(K_{max}-1)\mathcal{C}}{4\pi}\frac{a^{2-\beta}}{d} = \frac{c_1(K_{max}-1)\mathcal{C}}{4\pi}
a^{2-\beta-t}.   \label{este1j}
\end{eqnarray}
\bigskip

Substitution of \eqref{integralonomega} in \eqref{fracqcfracmain-effect-int-1} and using the estimates 
\eqref{integralonomega-subelements-abs} and \eqref{estmatemthint-effe-acc}  associated to $A$ and the estimates 
\eqref{este2j1} and \eqref{este1j} associated to $B$  gives us

\begin{eqnarray}
 U_a(z_m) +\sum_{\substack{j=1 \\ j\neq m}}^{M} G_\kappa(z_m,z_j)\bar{C}_j U_a(z_j)
 a^{2-\beta}
  &=&-V_n^{t}(z_m, \theta)\\
  &&+O\left(c_2\mathcal{C} K_{max} a^{\frac{2-\beta}{3}}\right)+O\left(\frac{c_1(K_{max}-1)\mathcal{C} }{4\pi} a^{2-\beta-t}\right).\nonumber\label{fracqcfracmain-effect-int-3}
\end{eqnarray}

We rewrite the algebraic system (\ref{fracqcfracmain-1}) as 
\begin{equation}\label{alge-rewriten-U}
U_{a,m} + \sum_{\substack{j=1 \\ j\neq m}}^{M} G_\kappa(z_m,z_j)\bar{C}_j U_{a, j} a^{2-\beta}
  =-V_n^{t}(z_m)
\end{equation} 
where we set $U_{a,m}:=C^{-1}_m Q_m$, recalling that {$C_m=\bar{C}_m\; a^{2-\beta}$}.
\bigskip

Taking the difference between \eqref{fracqcfracmain-effect-int-3} and 
\eqref{alge-rewriten-U}  produces the algebraic system
\begin{eqnarray}\label{fracqcfracmain-effect-int-4}
 (U_{a,m}-U_a(z_m)) +\sum_{\substack{j=1 \\ j\neq m}}^{M} G_\kappa(z_m,z_j)\bar{C}_j (U_{a,j}-U_a(z_j)) a^{2-\beta} 
 &=&O\left(\mathcal{C} K_{max}(a^{\frac{2-\beta}{3}}+a^{2-\beta- t}) \right).\nonumber
\end{eqnarray}

Comparing this system with \eqref{fracqcfracmain-1} and by using Lemma \ref{Mazyawrkthmac}, we obtain the estimate

\begin{eqnarray}\label{mazya-fnlinvert-small-ac-3-effect-dif}
 \sum_{m=1}^{M}(U_{a,m}-U_a(z_m))&=&O\left(\mathcal{C} K_{max} M (a^{\frac{2-\beta}{3}}+a^{2-\beta- t}) \right).
\end{eqnarray}

For the special case $d=a^t,\,M=O(a^{\beta-2})$ with  $t>0$, we have the following approximation of the far-field from 
the Foldy-Lax asymptotic expansion \eqref{x oustdie1 D_m farmain-1} and from the definitions 
$U_{a,m}:=C^{-1}_m Q_m$ and $C_m:=-\lambda_m\vert\partial D_m\vert=\bar{C}_m a^{2-\beta}$, for $m=1,\dots,M$:
\begin{eqnarray}\label{x oustdie1 D_m farmain-recent**-effect}
4\pi\;U^\infty(\hat{x},\theta) &=&V_n^{\infty}+\sum_{j=1}^{M}V_n(z_j, -\hat{x})\bar{C}_jU_{a,j}\; a^{2-\beta}+O\left(a^{3-s-2\beta}\right)\nonumber\\
&=&V_n^{\infty}+\sum_{j=1}^{M}V_n(z_j, -\hat{x})\bar{C}_jU_{a,j}\; a^{2-\beta}+O\left(a^{1-3\beta}\right).
\end{eqnarray}
Consider the far-field of type:
\begin{eqnarray}\label{acoustic-farfield-effect}
U^\infty_{\bold{C}_a}(\hat{x},\theta) &=& V_n^{\infty}+\frac{1}{4\pi}\int_{\Omega} V_n(y, -\hat{x}){K_a}(y){\bold{C}_a}(y) U_a(y) dy. \nonumber
\end{eqnarray}
corresponding to the scattering problem (\ref{elas-scat-equi}).
Taking the difference between \eqref{acoustic-farfield-effect} and \eqref{x oustdie1 D_m farmain-recent**-effect} we have:
 \begin{align}\label{acoustic-difference-farfield-effect}
 4\pi& (U^\infty_{\bold{C}_a}(\hat{x},\theta)-U^\infty(\hat{x},\theta)) \nonumber\\
 =& \int_{\Omega} V_n(y, -\hat{x}){K_a}(y){\bold{C}_a}(y) U_a(y) dy- \sum_{j=1}^{M}V_n(z_j, -\hat{x})\bar{C}_jU_{a,j}a^{2-\beta}
 +O\left(a^{1-3\beta}\right)\nonumber
\\ \nonumber =& \sum_{j=1}^{[a^{\beta-2}]}\int_{\Omega_j} V_n(y, -\hat{x}){K_a}(y){\bold{C}_a}(y) U_a(y)dy 
  -\sum_{j=1}^{[a^{\beta-2}]} \sum_{\substack{l=1 \\ z_l \in \Omega_j}}^{[K_a(z_j)]}V_n(z_l, -\hat{x})\bar{C}_lU_{a,l} a^{2-\beta}
+O\left(a^{1-3\beta}\right)
\\ \nonumber =& \sum_{j=1}^{[a^{\beta-2}]}{K_a}(z_j)\bar{C}_j\int_{\Omega_j} \left[V_n(y, -\hat{x}) U_a(y) - V_n(z_j, -\hat{x})U_a(z_j)\right]dy \nonumber
\\ \nonumber &+ \sum_{j=1}^{[a^{\beta-2}]}\bar{C}_ja^{2-\beta} \left[\sum_{\substack{l=1 \\ z_l \in \Omega_j}}^{[K_a(z_j)]}\left(V_n(z_j, -\hat{x}) U_a(z_j)-V_n(z_l, -\hat{x})U_a(z_l)\right)+\sum_{\substack{l=1 \\ z_l \in \Omega_j}}^{
[K_a(z_j)]} V_n(z_l, -\hat{x}) \left(U_a(z_l)-U_{a,l}\right)\right] \nonumber
\\ \nonumber &+O\left(a^{1-3\beta}\right)
 \\ \nonumber =& \sum_{j=1}^{[a^{\beta-2}]}\int_{\Omega_j}{K_a}(z_j)\bar{C}_j \left[V_n(y, -\hat{x}) U_a(y) - V_n(z_j, -\hat{x})U_a(z_j)\right]dy\nonumber\\
 &+ \sum_{j=1}^{[a^{\beta-2}]}\bar{C}_ja^{2-\beta} \sum_{\substack{l=1 \\ z_l \in \Omega_j}}^{
 [K_a(z_j)]}\left(V_n(z_j, -\hat{x}) U_a(z_j)-V_n(z_l, -\hat{x})U_a(z_l)\right)+\sum_{j=1}^{M}V_n(z_j, -\hat{x})\bar{C}_ja^{2-\beta} \left[U_a(z_j)-U_{a,j}\right]\nonumber\\ \nonumber
 \\ \nonumber &+O\left(a^{1-3\beta}\right)
\\ \nonumber \substack{= \\ \eqref{mazya-fnlinvert-small-ac-3-effect-dif} }& \sum_{j=1}^{[a^{\beta-2}]}{K_a}(z_j)\bar{C}_j\int_{\Omega_j} \left[V_n(y, -\hat{x}) U_a(y) - V_n(z_j, -\hat{x})U_a(z_j)\right]dy
\\ \nonumber
&+ \sum_{j=1}^{[a^{\beta-2}]}\bar{C}_ja^{2-\beta} \sum_{\substack{l=1 \\ z_l \in \Omega_j}}^{
 [K_a(z_j)]}\left(V_n(z_j, -\hat{x}) U_a(z_j)-V_n(z_l, -\hat{x})U_a(z_l)\right)+O\left(\mathcal{C}^2 K_{max}  (a^{\frac{2-\beta}{3}}+a^{2-\beta- t}) \right)\nonumber
\\ &+O\left(a^{1-3\beta}\right).
 \end{align}
Now, let us estimate the  difference $\sum_{j=1}^{[a^{\beta-2}]}{K_a}(z_j)\bar{C}_j\int_{\Omega_j} \left[V_n(y, -\hat{x}) U_a(y) - 
V_n(z_j, -\hat{x})U_a(z_j)\right]dy$.
Write, $f_1(y)=V_n(y, -\hat{x}) U_a(y)$. Using Taylor series, we can write
  $$f_1(y)-f_1(z_j)=(y-z_j)\cdot R_j(y),$$
  with 
  \begin{eqnarray}
   R_j(y)
   &=&\int_0^1\nabla_y (f_1)(y-\beta(y-z_j))\,d\beta\nonumber\\
   &=&\int_0^1\left[\nabla_y \left[V_n(-\hat{x},y-\beta(y-z_j)) U_{a}(y-\beta(y-z_j))\right] \right]\,d\beta\nonumber\\
   &=&\int_0^1\left[\nabla_y V_n(-\hat{x},y-\beta(y-z_j))\right] U_{a}(y-\beta(y-z_j))\,d\beta\nonumber\\
   &&+\int_0^1V_n(-\hat{x},y-\beta(y-z_j))\left[\nabla_y U_{a}(y-\beta(y-z_j))\right]\,d\beta.
  \end{eqnarray}

  Recall that $V_n$ satisfies the scattering problem (\ref{acimpoenetrable-2})-
  (\ref{radiationc-2}), hence it is also solution of the corresponding 
  Lippmann-Schwinger equation $V_n(x) +\int_\Omega \Phi(x, y)(n^2(y)-1)V_n(y) dy=-
  e^{\kappa x\cdot \theta}$. This is the same integral equation 
  as (\ref{fracqcfracmain-effect-int}) at the expense of replacing
  $K_a C_a$ by $n-1$ and $V_n$ by
  $e^{\kappa x\cdot \theta}$. Then replacing in Lemma \ref{invertibility-of-VC} $V_n$ by
  $e^{\kappa x\cdot \theta}$ and then $Y$ by $V_n$, we have the
  following estimates 
  \begin{equation}
   \Vert V_n\Vert_{L^\infty(\Omega)}, \Vert \nabla V_n \Vert_{L^\infty(\Omega)}
   \leq \tilde{C}
  \end{equation}
where $\tilde{C}$ depends only on $\Vert n\Vert_{L^\infty(\Omega)}$ and $\kappa$. Then
  
 \begin{eqnarray}\label{taylorremind-effect-singvariable}
 \vert R_j(y) \vert
   &\leq& \tilde{C} \left(\int_0^1 |U_{a}(y-\beta(y-z_j))|\, d\beta\,+\,
   \int_0^1\vert\nabla_y U_{a}(y-\beta(y-z_j))\vert \,d\beta\right).
  \end{eqnarray}
  Using  \eqref{taylorremind-effect-singvariable} we get the estimate

 \begin{align}\label{integralonomega-subelements-abs-effect}
  \bigg\vert\sum_{j=1}^{[a^{\beta-2}]}{K_a}(z_j)\bar{C}_j\int_{\Omega_j} &
  \big[V_n(y, -\hat{x})(y) U_a(y) - V_n(z_j, -\hat{x})U_a(z_j)\big]dy\bigg\vert 
  \nonumber\\
\leq \tilde{C}\sum_{j=1}^{[a^{\beta-2}]}{K_a}(z_j)\bar{C}_j& \bigg(\kappa 
\int_{\Omega_j}\vert y-z_j\vert \int_0^1 |U_{a}(y-\beta(y-z_j))|\, d\beta\,dy\bigg)\, 
\nonumber\\
 +\tilde{C} \sum_{j=1}^{[a^{\beta-2}]}&{K_a}(z_j)\bar{C}_j
  \left(\int_{\Omega_j}\vert y-z_j\vert\int_0^1\vert\nabla_y U_{a}(y-\beta(y-z_j))\vert \,d\beta\,dy\right)
\nonumber\\
  \leq  \tilde{C} \sum_{j=1}^{[a^{\beta-2}]}{K_a}(z_j)\bar{C}_j&c_1\, a^{2-\beta}\,a^\frac{2-\beta}{3}\,\left(\kappa +c_5\right) 
  \nonumber\\
   \quad\qquad\leq \tilde{C} K_{max} \mathcal{C}c_1\big(\kappa +c_5&\big)\,a^{\frac{2-\beta}{3}}.  
\end{align}

 In the similar way, using (\ref{mazya-fnlinvert-small-ac-3-effect-dif}),  we have,
 \begin{align}\label{integralonomega-subelements-abs-effect-1}
 \left\vert \sum_{j=1}^{[a^{\beta-2}]}\bar{C}_ja^{2-\beta} \sum_{\substack{l=1 \\ 
 z_l \in \Omega_j}}^{[K_a(z_j)]}\left(V_n(z_j, -\hat{x}) U_a(z_j)-
 V_n(z_l, -\hat{x})U_a(z_l)\right)\right\vert
 &\leq O\left(K_{max} \mathcal{C}(a^{\frac{2-\beta}{3}}+a^{2-\beta-t})\right).
 \end{align}
 
 Using the estimates \eqref{integralonomega-subelements-abs-effect} and \eqref{integralonomega-subelements-abs-effect-1} in \eqref{acoustic-difference-farfield-effect}, we obtain
 
 \begin{align}\label{acoustic-difference-farfield-effect-1}
 \frac{1}{4\pi}&U^\infty_{\bold{C}_a}(\hat{x},\theta)-U^\infty(\hat{x},\theta) \nonumber\\
 =&\, O\left(K_{max}a^\frac{2-\beta}{3} \mathcal{C}c_1\left(\kappa +c_5\right)\right) + 
 O(\mathcal{C}(\mathcal{C}+1)K_{max} M\; a^{2-\beta}(a^{\frac{2-\beta}{3}}+a^{2-\beta-t}))
   +O\left(a^{1-3\beta}\right)
 \nonumber\\
    =&\,O\left( a^{\frac{2-\beta}{3}}+a^{2-\beta-t}+a^{1-3\beta}\right).
  \end{align}
 Since $Vol(\Omega)$ is of order $a^{\beta-2}(\frac{a^{2-\beta}}{2}+\frac{d}{2})^3$, and $d$ is 
 of the order $a^t$, we should have {$t\geq\frac{2-\beta}{3}$}. Hence, we need to impose the following 
 conditions
\begin{eqnarray*}
  t\geq\frac{2-\beta}{3},\;  2-\beta-t \geq 0\;    
      \mbox{ and }\; 1-3\beta>0.
  \end{eqnarray*}  

 \subsection{End of the proof of Theorem \ref{equivalent-medium}}
  Combining the estimates (\ref{acoustic-difference-farfield-effect-1}) and (\ref{appro-0-a-1}), we deduce that
\begin{equation}\label{final}
  \frac{1}{4\pi} \left[U^\infty(\hat{x}, \theta)-U_{0}^\infty(\hat{x}, \theta) 
  \right ]\cdot \hat{x}=O(a^{\min \{\gamma,\; \frac{1}{3},\; \frac{2-\beta}{3},\; 2-\beta-t,\; 
  1-3\beta\}}),\; a<<1, \;~~ 
  \frac{2-\beta}{3} \leq t \leq 2-\beta
 \end{equation}
uniformly in terms of  $\hat x , \theta\; \in \mathbb{S}^{2}$.

 \begin{center} {\bf{Acknowledgment}} \end{center}
This work was funded by the Deanship of Scientific Research (DRS), King Abdulaziz University, under the grant no. 20-130-36-HiCi. The authors, therefore, acknowledge with thanks DRS technical and financial support.

\bibliographystyle{abbrv}

\end{document}